\newtheorem{thm}{Theorem}[section]
\newtheorem{cor}{Corollary}[section]
\newtheorem{lemma}{Lemma}[section]
\theoremstyle{definition}
\newtheorem{rk}{Remark}[section]
\newtheorem{defi}{Definition}[section]
\newtheorem{exa}{Example}[section]
\def\N{\mathbb{N}}
\newcommand{\Ai}{\mathcal{A}}
\newcommand{\Bi}{\mathcal{B}}
\newcommand{\Fi}{\mathcal{F}}
\newcommand\blfootnote[1]{%
  \begingroup
  \renewcommand\thefootnote{}\footnote{#1}%
  \addtocounter{footnote}{-1}%
  \endgroup
}
\begin{document}
\title[On the image set and reversibility of shift morphisms]{On the image set 
and reversibility of shift morphisms over discrete alphabets}
\author[J. Campos]{Jorge Campos}
\address{Universidad Nacional Experimental Polit\'ecnica Antonio José de Sucre.
Departamento de Estudios Básicos. Sección de Matem\'atica. 
Barquisimeto, Venezuela.}
\email{jorgkmpos@gmail.com}
\author[N. Romero]{Neptal\'i Romero}
\address{Universidad Centroccidental
Lisandro Alvarado. Departamento
de Matem\'atica. Decanato de Ciencias y Tecnolog\'{\i}a.
Apartado Postal 400.
Barquisimeto, Venezuela.}
\email{nromero@ucla.edu.ve}
\author[R. Vivas]{Ram\'on Vivas}
\address{Universidad Nacional Experimental Polit\'ecnica Antonio José de Sucre.
Departamento de Estudios Básicos. Sección de Matem\'atica. 
Barquisimeto, Venezuela.}
\email{ramon.alberto.vivas@gmail.com}

\begin{abstract}
In this paper we provide sufficient conditions in 
order to show that the set image of a continuous and shift-commuting 
map defined on a shift space over an arbitrary discrete alphabet is also a 
shift space; additionally, if such a 
map is injective, then its inverse is also continuous and shift-commuting.
\end{abstract}
\subjclass[2010]{37B10, 37B15}
\keywords{shift space, sliding block code, local rule}

\maketitle

\section{Introduction}
\blfootnote{This work was partially supported by  
the Consejo de Desarrollo Científico, Humanístico y Tecnológico (CDCHT) of the 
Universidad Centroccidental Lisandro Alvarado under grant 1186-RCT-2019.}
Shift spaces and their morphisms constitute a powerful tool for modeling several phenomena in dynamical systems, this practice is known as symbolic dynamics; they are also an important part of the platform for diverse disciplines like automata, coding, information and system theory. 
Depending on the context there are two typical ways to establish them: either on bi-infinite sequences of symbols indexed by the set of integer numbers $\mathbb{Z}$, or by using one-sided infinite sequences of symbols indexed by the set of natural numbers $\mathbb{N}$; in this paper we shall deal exclusively with the two-sided setting. Some classic textbooks on the subjects: \cite{kitchens} and \cite{lind}.

\smallskip
 
The standard construction of shift spaces begins with a finite set of symbols: an {\em alphabet}; however, in certain environments it is necessary to consider alphabets with infinite symbols, such is the situation when the thermodynamic formalism is developed for the so-called {\em countable state Markov shifts}, see for \cite{kitchens} and \cite{sarig}. Shift spaces over alphabets with infinite symbols were also considered by Gromov in his seminal work on endomorphisms of symbolic algebraic varieties and topological invariants of dynamical systems, see \cite{gromov} and \cite{gromov2}. These outstanding facts point to a deserved attention to the study of shift spaces
over infinite alphabets. 

\smallskip

In what follows an alphabet is any nonempty set $\mathcal{A}$ equipped with the discrete topology. 
Given an alphabet $\mathcal{A}$, the product space $\mathcal{A}^{\mathbb{Z}}$ is considered; this is the set of all bi-infinite sequences over $\mathcal{A}$ endowed with the Cantor metric, which is defined for all $x=(x_n)_{n\in\mathbb{\mathbb{Z}}}$ and $y=(y_n)_{n\in\mathbb{\mathbb{Z}}}$ in $\mathcal{A}^{\mathbb{\mathbb{Z}}}$ by
\begin{equation}\label{met}
d(x,y)=\begin{cases}0,\,\text{ if $x_n=y_n$ for all $n\in\mathbb{Z}$}
\\2^{-k},\,\text{ if $x\neq y$ and
$k=\min\{|n|: x_n\neq y_n\}$};\end{cases}
\end{equation}
this topological space is called the (two-sided) {\em full shift over $\mathcal{A}$}.
After that, it is contemplated the {\em shift operator} $\sigma:\mathcal{A}^{\mathbb{\mathbb{Z}}}\to\mathcal{A}^{\mathbb{\mathbb{Z}}}$ given by $\sigma(x)=y$, where $y_n=x_{n+1}$ for all $n\in\mathbb{\mathbb{Z}}$; it is a homeomorphism. Thus, a {\em shift space over $\mathcal{A}$} (or a {\em subshift over $\mathcal{A}$}) is any nonempty closed subset $X$ of $\mathcal{A}^{\mathbb{\mathbb{Z}}}$ which is shift-invariant, that is $\sigma(X)=X$; a {\em shift morphism} (or simply a {\em morphism}) on the shift space $X\subseteq \mathcal{A}^{\mathbb{Z}}$ is any continuous and {\em shift-commuting} mapping $\Phi$ from $X$ to some full shift $\mathcal{U}^{\mathbb{Z}}$; here the term shift-commuting means $\Phi\circ \sigma=\sigma\circ \Phi$, where $\sigma$ indistinctly denotes the shift map in both $\mathcal{A}^{\mathbb{Z}}$ and $\mathcal{U}^{\mathbb{Z}}$.

\smallskip

Shift spaces can be equivalently introduced by using a set of special words constructed with the symbols in the alphabet; more precisely, a {\em word} or a {\em block} over $\mathcal{A}$ is a finite sequence of symbols in $\mathcal{A}$, the number of such a symbols is the {\em length} of the block. Let $\mathcal{A}^*$ denote the set of blocks over $\mathcal{A}$; it is said that a block $w\in\mathcal{A}^*$ {\em appears} in $x\in\mathcal{A}^{\mathbb{Z}}$ if there exist an integer closed interval $[i,j]\subset\mathbb{Z}$ ($i\leq j$) such that the restriction $x\vert_{[i,j]}$ of $x$ to $[i,j]$ is just $w$; that is $x\vert_{[i,j]}=x_i\cdots x_j=w$. It is well known that a nonempty set $X\subseteq \mathcal{A}^{\mathbb{Z}}$ is a shift space over $\mathcal{A}$ if and only if there exists $\Fi\subset \mathcal{A}^*$ (not necessarily unique) in such a way that $x\in X$ if and only if no block belonging to $\Fi$ appears in $x$; such a set is called {\em a forbidden set for $X$}, obviously $\mathcal{F}=\emptyset$ if $X$ is a full shift. There is another set of distinguished words for each shift space. A word $w \in\mathcal{A}^*$ is called {\em allowed} for the shift space $X\subseteq \mathcal{A}^{\mathbb{Z}}$ if it appears in some point of $X$. The set $\mathcal{L}(X)$ of the allowed words for the shift space $X$ is called {\em language} of $X$, clearly 
$\mathcal{L}(X)=\{w\in\mathcal{A}^*: \text{ $w$ appears in $x$ for some $x$ in $X$}\}$.
The language $\mathcal{L}(X)$ is characterized by the {\em factorial} and {\em extendable} properties:
\begin{enumerate}[(a)]
\item
If $w$ is a block in $\mathcal{L}(X)$ and $u$ is a subblock of $w$, then $u\in\mathcal{L}(X)$.
\item 
If $w\in\mathcal{L}(X)$, then there are nonempty blocks $u,v\in\mathcal{L}(X)$ such that the 
concatenation block $uwv$ is also in $\mathcal{L}(X)$. 
\end{enumerate}
Properties (a) and (b) characterize the languages of shift spaces; that is, given a nonempty set $\mathcal{L}\subset\mathcal{A}^*$ satisfying (a) and (b), there is a unique shift space $X\subseteq\mathcal{A}^{\mathbb{Z}}$ such that $\mathcal{L}=\mathcal{L}(X)$. For every integer $n\geq 1$, $\mathcal{L}_n(X)$ denotes the subset of $\mathcal{L}(X)$ whose blocks have length $n$. It is not hard to show that $X$ is a compact space if and only if $\mathcal{L}_1(X)$ is a finite set; i.e. the alphabet supporting $X$ is a finite set. Notice that the complement of $\mathcal{L}(X)$ is a forbidden set for $X$; indeed, it is the biggest one. Obviously when $X$ is the full shift, $\mathcal{L}_n(X)=\mathcal{A}^n$.

\smallskip

There is a simple way to obtain morphisms on a shift space $X\subseteq \mathcal{A}^{\mathbb{Z}}$. If $\mathcal{U}$ is an alphabet, $m$ and $n$ are integers with $n-m\geq 0$ and   $\varphi:\mathcal{L}_{m+n+1}(X)\to\mathcal{U}$ is an arbitrary function, then the map $\Phi:X\to\mathcal{U}^{\mathbb{Z}}$ defined by
\begin{equation}\label{e1}
\Phi(x)_i=\varphi\left(x\vert_{[i-m,i+n]}\right)\,\text{for every 
$x$ in $X$ and each $i\in\mathbb{Z}$}
\end{equation}
is a morphism on $X$. Every map so defined is called {\em sliding block code}, the function $\varphi$ is known as a {\em local rule} inducing $\Phi$, and the integers $m$ and $n$ are called respectively {\em memory} and {\em anticipation} of $\Phi$. The local rule and the integers $m$ and $n$ provide a sort of window of length $n+m+1$, it slides through each element of $X$ to determine the value of each component of its image under the corresponding sliding block code. Sometimes it is useful to consider wider windows: take integers $M$ and $N$ with $M\geq m$ and $N\geq n$, the language factorial property implies that $\widehat{\varphi}:\mathcal{L}_{M+N+1}(X)\to \mathcal{U}$ with $\widehat{\varphi}(a_{-M}\cdots a_N) 
=\varphi(a_{-m}\cdots a_n)$ is well defined for every $a_{-M}\cdots a_N\in \mathcal{L}_{M+N+1}(X)$ and it induces the same sliding block code than $\varphi$. 

\smallskip

When $X$ is the full shift $\mathcal{A}^\mathbb{Z}$ and $\mathcal{U}=\mathcal{A}$, the concept of sliding block code is just the concrete definition of {\em cellular automaton} over the alphabet $\mathcal{A}$. Gustav A. Hedlund in his influential article \cite{hedlund} proved that if $\mathcal{A}$ is finite, then the set of cellular automata in $\mathcal{A}^\mathbb{Z}$ matches with the set of continuous and shift-commuting self-mappings of $\mathcal{A}^\mathbb{Z}$; so the cellular automata over finite alphabets were characterized. Since Hedlund credited Morton L. Curtis and Roger Lyndon as co-discoverers of this characterization, the result is known as Curtis--Hedlund--Lyndon theorem; it remains true in the framework of shift spaces over finite alphabets: 

\begin{thm}[Curtis--Hedlund--Lyndon Theorem {\cite[Theorem 6.2.9]{lind}}]\label{chl1}
Let $X$ and $Y$ be shift spaces over finite alphabets. A map $\Psi:X\to Y$ is a sliding block code if and only if it is continuous and shift-commuting.   
\end{thm}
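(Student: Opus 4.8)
The plan is to treat the two implications separately, proving the forward one by direct computation and the converse by exploiting compactness of finite-alphabet shifts.

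For the forward implication, suppose $\Psi$ is a sliding block code induced by a local rule $\varphi:\Li_{m+n+1}(X)\to\Bi$ (writing $Y\subseteq\Bi^{\mathbb{Z}}$), with memory $m$ and anticipation $n$. The shift-commuting property follows from a one-line index shift: for every $x\in X$ and $i\in\mathbb{Z}$ both $\Psi(\sigma x)_i$ and $\sigma(\Psi x)_i$ reduce to $\varphi(x\vert_{[i-m+1,\,i+n+1]})$. Continuity is equally direct: if $x$ and $y$ coincide on the central block $[-(k+\max\{m,n\}),\,k+\max\{m,n\}]$, then each output coordinate $\Psi(x)_i$ with $|i|\le k$ reads only symbols of $x$ lying inside that block, so $\Psi(x)$ and $\Psi(y)$ agree on $[-k,k]$; this is precisely the statement that $\Psi$ is (uniformly) continuous for the Cantor metric \eqref{met}.

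The converse implication is the substantive direction and is where the finiteness hypothesis is used. Since the alphabet of $X$ is finite, $X$ is compact, so the continuous map $\Psi$ is in fact uniformly continuous. Applying uniform continuity to the output tolerance $2^{-1}$ furnishes an integer $N\ge 0$ such that whenever $x,y\in X$ satisfy $x\vert_{[-N,N]}=y\vert_{[-N,N]}$ one has $d(\Psi x,\Psi y)\le 2^{-1}$, which forces $\Psi(x)_0=\Psi(y)_0$. In other words, the central symbol $\Psi(x)_0$ depends only on the window $x\vert_{[-N,N]}$. I would then define a local rule $\varphi:\Li_{2N+1}(X)\to\Bi$ by setting $\varphi(w)=\Psi(x)_0$ for any $x\in X$ with $x\vert_{[-N,N]}=w$; such an $x$ exists because $w$ is allowed and $X$ is shift-invariant, and the value is independent of the choice of $x$ exactly by the uniform-continuity estimate just obtained.

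It remains to check that this $\varphi$ reproduces $\Psi$ at every coordinate, and here the shift-commuting hypothesis enters. Iterating $\Psi\circ\sigma=\sigma\circ\Psi$ gives $\Psi(\sigma^i x)=\sigma^i(\Psi x)$, hence $\Psi(x)_i=\Psi(\sigma^i x)_0$; since $(\sigma^i x)\vert_{[-N,N]}=x\vert_{[i-N,\,i+N]}$, we conclude $\Psi(x)_i=\varphi(x\vert_{[i-N,\,i+N]})$ for all $i$, which is exactly the sliding block code form \eqref{e1} with memory and anticipation both equal to $N$ (the image automatically lies in $Y$, since we have recovered $\Psi$ itself). The main obstacle is the well-definedness of $\varphi$: everything hinges on passing from pointwise to uniform continuity, and this is the single place where compactness—equivalently, finiteness of the alphabet—is indispensable. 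Indeed, it is precisely the failure of this step over infinite alphabets that the remainder of the paper must confront.
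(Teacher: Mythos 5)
Your proof is correct: the forward direction (index shift for shift-commutation, window containment for continuity) and the converse (compactness of $X$ via finiteness of the alphabet, Heine--Cantor uniform continuity with output tolerance $2^{-1}$ producing a uniform window $[-N,N]$, well-definedness of $\varphi$ on $\mathcal{L}_{2N+1}(X)$ by shift-invariance, and propagation from the $0$-coordinate to all coordinates by shift-commutation) is the standard, complete argument. Note, however, that the paper never proves Theorem~\ref{chl1}; it is quoted from \cite[Theorem 6.2.9]{lind}, and your argument is essentially the one found there. The machinery the paper actually develops takes a genuinely different route, visible in its sketch of the extended version, Theorem~\ref{chl4}: for any shift morphism $\Psi$, discreteness of the target alphabet alone makes the $0$-coordinate map $\Psi_0$ locally constant, so the preimages $\Psi_0^{-1}(\{b\})$, with $b\in\Psi_0(X)$, decompose $X$ into a barrier of cylinders on which $\Psi_0$ is a local rule inducing $\Psi$ --- no compactness is invoked, which is exactly why that statement survives over arbitrary discrete alphabets. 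Your uniform-continuity step is then precisely the ingredient that upgrades such a barrier to one whose cylinders share the common domain $[-N,N]$, i.e.\ to a classical local rule on $\mathcal{L}_{2N+1}(X)$ as in \eqref{e1}. So the two approaches buy different things: yours is shorter and delivers the fixed-window local rule in one stroke, while the paper's factorization (continuity $\Leftrightarrow$ barrier-induced, plus compactness $\Rightarrow$ uniform window) isolates the compactness-free content and pinpoints the step that fails for morphisms over infinite alphabets such as \eqref{ejem1} --- the failure your last sentence correctly identifies, and which the rest of the paper is built to address.
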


This important theorem fails when the shift space is not compact, take for example the morphism $\Phi:\N^\mathbb{Z}\to\N^\mathbb{Z}$ ($\N=\{0,1,2,\cdots\}$) given by
\begin{equation}\label{ejem1}
\Phi(x)_n=\sum_{|j|\leq x_n}x_{j+n},\text{ for all $x\in X$ and every $n\in\mathbb{Z}$};
\end{equation}
it is continuous and shift-commuting but it cannot be expressed through a local rule as in \eqref{e1}. 
In the study of shift spaces and their morphisms over finite alphabets the compactness has a special role;  particularly, it allows to prove the following two results which are part, together Curtis-Hedlund-Lyndon theorem, of the folklore of symbolic dynamics and coding theory:

\begin{thm}[{\cite[Theorem 1.5.13]{lind}}]\label{image}
If $\Phi:X\to \mathcal{U}^\mathbb{Z}$ is a shift morphism, then $\Phi(X)$ is a closed subset of $\mathcal{U}^{\mathbb{Z}}$; so, it is a shift space.
\end{thm}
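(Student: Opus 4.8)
The plan is to exploit compactness, which is essentially the only ingredient needed. First I would observe that since $X$ is a shift space over a \emph{finite} alphabet, the set $\mathcal{L}_1(X)$ is finite, so by the characterization recalled above $X$ is compact. As $\Phi$ is continuous and the continuous image of a compact set is compact, it follows at once that $\Phi(X)$ is a compact subset of $\mathcal{U}^{\mathbb{Z}}$.

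Next I would invoke that $\mathcal{U}^{\mathbb{Z}}$, endowed with the Cantor metric $d$ of \eqref{met}, is a metric space and hence Hausdorff; and in a Hausdorff space every compact subset is closed. Therefore $\Phi(X)$ is closed in $\mathcal{U}^{\mathbb{Z}}$. To conclude that $\Phi(X)$ is a shift space it remains to check that it is nonempty and shift-invariant. Nonemptiness is immediate from $X\neq\emptyset$, while shift-invariance follows from the shift-commuting property together with $\sigma(X)=X$, since $\sigma(\Phi(X))=\Phi(\sigma(X))=\Phi(X)$.

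I do not anticipate a genuine obstacle here: the whole argument is a clean application of two standard topological facts, that continuous images of compact sets are compact and that compact subsets of metric spaces are closed. The only step worth emphasizing is the one at which finiteness of the alphabet is used, namely to guarantee compactness of $X$, for this is precisely the hypothesis whose removal motivates the remainder of the paper.
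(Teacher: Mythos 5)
Your proposal is correct and coincides with the standard argument the paper is invoking here: Theorem~\ref{image} is stated for shift spaces over finite alphabets (the paper cites \cite[Theorem 1.5.13]{lind} and explicitly attributes the result to the ``special role'' of compactness), and the intended proof is exactly yours --- compactness of $X$, continuity of $\Phi$, closedness of compact sets in the metric space $\mathcal{U}^{\mathbb{Z}}$, and shift-invariance from $\sigma(\Phi(X))=\Phi(\sigma(X))=\Phi(X)$. Nothing is missing.
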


\begin{thm}[{\cite[Theorem 1.5.14]{lind}}]\label{rever}
If $\Phi:X\to \mathcal{U}^\mathbb{Z}$ is an injective shift morphism, then the inverse map $\Phi^{-1}:\Phi(X)\to X$ is a sliding block code.
\end{thm}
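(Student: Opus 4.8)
The plan is to combine compactness with the two facts already in hand: Theorem~\ref{image}, which ensures that $\Phi(X)$ is a shift space, and the Curtis--Hedlund--Lyndon Theorem~\ref{chl1}, which turns abstract topological-dynamical properties of $\Phi^{-1}$ into the existence of a local rule. Since $X$ and $\mathcal{U}^\mathbb{Z}$ are taken over finite alphabets, $X$ is compact and $\mathcal{U}^\mathbb{Z}$ is compact and Hausdorff; by Theorem~\ref{image} the image $\Phi(X)$ is closed in $\mathcal{U}^\mathbb{Z}$, hence it is itself a compact shift space over the finite alphabet $\mathcal{U}$.

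First I would regard $\Phi$ as a map onto its image, $\Phi:X\to\Phi(X)$. By hypothesis this is a continuous bijection, and it goes from a compact space to a Hausdorff space; therefore it is a homeomorphism, and in particular the inverse $\Phi^{-1}:\Phi(X)\to X$ is continuous. This is the one step where the finiteness of the alphabets (equivalently, the compactness of $X$) is genuinely used, and I expect it to be the crux of the argument: without compactness a continuous bijection need not have a continuous inverse, which is exactly why the conclusion can fail over infinite alphabets.

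Next I would verify that $\Phi^{-1}$ is shift-commuting. Given $y\in\Phi(X)$, write $y=\Phi(x)$ for the unique $x\in X$. Using $\Phi\circ\sigma=\sigma\circ\Phi$ one obtains $\sigma(y)=\sigma(\Phi(x))=\Phi(\sigma(x))$, whence $\Phi^{-1}(\sigma(y))=\sigma(x)=\sigma(\Phi^{-1}(y))$. Thus $\Phi^{-1}\circ\sigma=\sigma\circ\Phi^{-1}$ on $\Phi(X)$.

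Finally, $\Phi^{-1}:\Phi(X)\to X$ is a continuous and shift-commuting map between shift spaces over finite alphabets, so Theorem~\ref{chl1} applies directly and yields that $\Phi^{-1}$ is a sliding block code, which completes the proof. I anticipate that the routine parts (the commutation identity and the invocation of Theorem~\ref{chl1}) will be immediate, and that essentially all of the content is concentrated in the compactness-based continuity of the inverse.
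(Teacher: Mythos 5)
Your proof is correct, but note that the paper itself gives no proof of Theorem~\ref{rever}: the statement is quoted from \cite[Theorem~1.5.14]{lind}, and the textbook's argument runs along a genuinely different route from yours. In \cite{lind} the Curtis--Hedlund--Lyndon theorem (Theorem~\ref{chl1} here) is only proved much later, so the inverse's local rule is constructed directly: by a compactness (subsequence-extraction) argument one shows there exists $N$ such that the central block $\Phi(x)\vert_{[-N,N]}$ determines $x_0$; otherwise, for every $n$ there would be points $x^{(n)},\tilde{x}^{(n)}\in X$ with $\Phi(x^{(n)})\vert_{[-n,n]}=\Phi(\tilde{x}^{(n)})\vert_{[-n,n]}$ but $x^{(n)}_0\neq\tilde{x}^{(n)}_0$, and limits of convergent subsequences would produce $x\neq\tilde{x}$ with $\Phi(x)=\Phi(\tilde{x})$, contradicting injectivity; this $N$ then serves as memory and anticipation of $\Phi^{-1}$. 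Your route---a continuous bijection from a compact space onto a Hausdorff space is a homeomorphism, after which Theorem~\ref{chl1} upgrades ``continuous and shift-commuting'' to ``sliding block code''---is shorter and perfectly legitimate in the context of this paper, since theorems \ref{image} and \ref{chl1} are stated independently and neither depends on \ref{rever}, so there is no circularity; what the textbook approach buys in exchange is an explicit window size for $\Phi^{-1}$ and self-containedness. One correction to your commentary: the homeomorphism step is not ``the one step where finiteness is genuinely used''; the two results you invoke as black boxes also rest on compactness and both fail over infinite alphabets (the paper's Example~\ref{noimage} and the morphism \eqref{ejem1} witness this), so finiteness is in fact spread across all three ingredients of your argument.
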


These two results are known as {\em closed image property} and {\em reversibility}, respectively. Like Curtis--Hedlund--Lyndon Theorem, both Theorem \ref{image} and Theorem \ref{rever} are not true in the non-compact context. We refer to \cite[Example 1.10.3]{cecche} and \cite[Lemma 5.1]{cecche2} where notable examples of bijective and non-reversible shift morphisms over non-finite alphabets are shown. For its part, the following example shows a non-surjective shift morphism on $\mathbb{N}^\mathbb{Z}$ whose image set is not a shift space.

\begin{exa}\label{noimage}
Let $\Psi:\N^\mathbb{Z}\to\N^\mathbb{Z}$ be the ESBC given by
$$
\Psi(x)_j=x_{j-x_j}+x_{j+x_j},\,\text{ for all $x\in\N^\mathbb{Z}$ and every $j\in\mathbb{Z}$}.
$$
By direct verification it is shown that this map is a shift morphism. It is also easy to see that the constant sequence $(x_n)_{n\in\mathbb{Z}}=1^\mathbb{Z}$ ($x_n=1$ for all $n$) has no preimage under $\Psi$.
Now, for every integer $k\geq 1$, we consider 
$x^k=(x_j^k)_{j\in\mathbb{Z}}\in\mathbb{N}^\mathbb{Z}$ where
$$
x_j^k=
\begin{cases}
3k+1-j,\,\text{ if $-k\leq j\leq k$}\\
0,\,\text{ if $j\geq k+1$}\\
1,\,\text{ if $j\leq -k-1$}
\end{cases}.
$$
By taking the $\Psi$-image of each $x^k$ one obtains that $\Psi(x^k)|_{[-k,k]}=1^{2k+1}$
for all $k\geq 1$, therefore
$\Psi(x^k)\to 1^\mathbb{Z}$ when $k\to+\infty$, this implies the image set
$\Psi(\N^\mathbb{Z})$ is not a shift space; also observe that $(x^k)_{k\geq 1}$ has no 
convergent subsequences.
\end{exa}

Each one of the results in the trilogy: Theorem \ref{chl1}, Theorem \ref{image} and Theorem \ref{rever}  
have been validated in the world of cellular automata by omitting the finiteness of the alphabet. 
For our aim it is necessary to make some brief comments concerning it; let us begin by the last two theorems. According to our knowledge of the literature on the subject, the most recent developments have been reported by Ceccherini-Silberstein and Coornaert in \cite{cecche3} and \cite{cecche2}. In these articles the authors consider as alphabet a vector space $V$ and a group $G$ substituting $\mathbb{Z}$, the vector space is endowed with the discrete topology and the cartesian product $V^G$ is equipped with the product topology; clearly $V^G$ has the natural vectorial structure induced by that of $V$. Then the concept of cellular automaton on $V^G$ is introduced in an analogous way than in the classical case, see \cite[Chapter 1]{cecche} for details. In this setting are proved the following results: 

\begin{thm}[{\cite[Theorem 1.2, Corollary 1.6 and Corollary 1.7]{cecche3}}]\label{cecchethm}
Let $V$ be a finite-dimensional space and $G$ a group. 
\begin{enumerate}[a)]
\item 
If $\tau:V^G\to V^G$ is a linear cellular automaton, then $\tau(V^G)$ is closed in $V^G$; that is, $\tau$ has the closed image property.
\item
If $\tau:V^G\to V^G$ is a bijective linear cellular automaton, then $\tau^{-1}$ is also a cellular automaton, i.e. $\tau$ is reversible.
\end{enumerate}
\end{thm}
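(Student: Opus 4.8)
The plan is to recast both statements in the language of linearly compact vector spaces, a framework tailored to finite-dimensional coefficients. Fix the ground field $K$ over which $V$ is a vector space, and recall that a linearly topologized $K$-vector space is \emph{linearly compact} if it is Hausdorff and every family of closed affine subspaces (cosets of closed linear subspaces) with the finite intersection property has nonempty intersection. The first step is to record the two structural facts that drive everything: a discrete vector space is linearly compact if and only if it is finite-dimensional, and an arbitrary product of linearly compact spaces is again linearly compact. Since $V$ is finite-dimensional with the discrete topology, it is linearly compact, hence $V^G$, being a product of copies of $V$ with the prodiscrete topology, is linearly compact as well. This is the single place where finite-dimensionality of $V$ is genuinely used.

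For part a), I would use that a linear cellular automaton $\tau:V^G\to V^G$ is precisely a continuous $K$-linear $G$-equivariant map, which is the linear Curtis--Hedlund--Lyndon theorem for cellular automata over groups. Two further properties of linearly compact spaces then close the argument: the image of a linearly compact space under a continuous linear map is linearly compact, and a linearly compact subspace of a Hausdorff linearly topologized space is automatically closed. Applying the former to $\tau$ shows that $\tau(V^G)$ is linearly compact; applying the latter inside the Hausdorff space $V^G$ shows that $\tau(V^G)$ is closed, which is exactly the closed image property.

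For part b), I would invoke the open mapping principle in this category: a continuous linear bijection from a linearly compact space onto a Hausdorff linearly topologized space is open, so its inverse is continuous. Thus $\tau^{-1}:V^G\to V^G$ is continuous. It is $K$-linear because $\tau$ is, and it is $G$-equivariant because the equivariance $\tau\circ g=g\circ\tau$ of $\tau$, valid for every $g\in G$, transfers to $\tau^{-1}$ by composing with $\tau^{-1}$ on both sides. Being continuous, linear and $G$-equivariant, $\tau^{-1}$ is a linear cellular automaton by the same theorem invoked in part a), which is precisely reversibility.

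The main obstacle is not the formal deductions above but the groundwork on linearly compact spaces, specifically the closedness of linearly compact subspaces and the open mapping property; these are the technical heart, and they are exactly what breaks down for arbitrary infinite-dimensional $V$. Once this topological-algebraic toolkit is in place, finite-dimensionality enters only through the equivalence \emph{discrete and linearly compact if and only if finite-dimensional}, while the Curtis--Hedlund--Lyndon theorem for groups furnishes the bridge translating the linear-topological conclusions back into the language of cellular automata.
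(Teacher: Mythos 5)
You should first be aware that this paper never proves Theorem \ref{cecchethm}: the statement is imported verbatim from \cite{cecche3}, where a) and b) are obtained as corollaries of an induction/restriction theory for cellular automata over subgroups, and the present paper only adds that alternative proofs using the Mittag-Leffler lemma for projective sequences of sets appear in \cite{cecche2}. So your argument has to be measured against those cited proofs, and it takes a genuinely different route, through Lefschetz's theory of linearly compact spaces; as far as I can check, it is correct. The five facts you lean on are all classical: a discrete vector space is linearly compact iff it is finite-dimensional; arbitrary products of linearly compact spaces are linearly compact; continuous linear images of linearly compact spaces are linearly compact; a linearly compact subspace of a Hausdorff linearly topologized space is closed; and a continuous linear bijection from a linearly compact space onto a Hausdorff linearly topologized space has continuous inverse (for this last one: every open subspace $U$ of a linearly compact space $E$ has finite codimension, since $E/U$ is discrete and linearly compact; its image is then a closed subspace of finite codimension of the target, and such a subspace is open because the quotient by it is a finite-dimensional Hausdorff linearly topologized space, hence discrete). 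The linear Curtis--Hedlund--Lyndon theorem you use as the bridge is also sound, and in fact holds for \emph{arbitrary} $V$ --- linearity upgrades continuity at $0$ to a finite memory set --- which confirms your claim that finite-dimensionality enters only through linear compactness.

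As for what each approach buys: yours makes the finite-dimensional linear case structurally identical to the classical finite-alphabet case, with linear compactness playing the role of compactness (closed image property $=$ ``continuous image of a compact set is compact, hence closed''; reversibility $=$ ``a continuous bijection from a compact space to a Hausdorff space is a homeomorphism''), and it makes transparent why both statements collapse for infinite-dimensional $V$. The cost is that the product theorem for linear compactness and the open mapping principle are genuine theorems, not formalities, so your proof is only as self-contained as the topological algebra it imports; if you write this up, state them precisely and cite them (e.g.\ K\"othe's treatment of linearly compact spaces), since they carry the real mathematical weight. The Mittag-Leffler route of \cite{cecche2} is, by contrast, more elementary and set-theoretic, working with projective sequences indexed by an exhaustion of the group (so it is adapted to countable groups, with the restriction/induction machinery of \cite{cecche3} covering general $G$), at the price of hiding the compactness analogy that your argument puts front and center.
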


New proofs of a) and b) are presented in \cite{cecche2} by using the technical tool of Mittag-Leffler lemma for projective sequence of sets. 
In this same article it is mentioned that more general results than a) were obtained by Gromov in \cite{gromov}. Also there it is proved that when the vector space $V$ is infinite-dimensional, it is possible to construct on $V^\mathbb{Z}$ a linear cellular automaton without the closed imagen property (see \cite[Theorem 1.5]{cecche2}) and a non-reversible bijective linear cellular automaton (see \cite[Theorem 1.2]{cecche2}). 

\smallskip

Related to the first result of the trilogy we can say that in \cite{rrv}
is proved a generalization of the Curtis--Hedlund--Lyndon theorem for cellular automata over arbitrary discrete alphabets, it is based on the concept of barrier and extended notions of local rule and sliding block code also introduced in \cite{rrv}. Such generalization is transferred without any difficulty to continuous and shift-commuting mappings between shift spaces over such kind of alphabets, it is part of the foundation on which the discussion in the present paper is developed; see Theorem \ref{chl4} in the next section for its precise statement and concise commentaries about its proof. There are several references where other different versions of the pioneer Curtis--Hedlund--Lyndon theorem are presented, for example: \cite{cecche}, 
\cite{sobottaka0}, \cite{sobottaka}, \cite{muller}, \cite{ott}, \cite{richarson}, \cite{sobottaka1} and \cite{wacker}. In some of this versions the topological structure (product topology) or the notions of shift space and sliding block code are modified, we maintain the original relative product topology on the shift spaces and the essence of the notion of local rule in order to 
preserve the continuity and shift-commuting property as the underlying concepts 
in the notion of sliding block code. 

\smallskip

Having as conceptual basis the characterization of the continuous and shift-commuting maps between shift spaces, i.e. the extended Curtis--Hedlund--Lyndon Theorem established in Theorem \ref{chl4} below, 
the main goal in this paper is to give sufficient conditions in order to guarantee the closed image property and reversibility for such mappings.

\smallskip

Since the sufficient conditions that we will establish for our purposes are somewhat technical, 
it is necessary to clarify this technicality for a better understanding of the discussion, which even delays the precise establishment of the central results of this work.
We have organized the rest of the article as follows. 
In Section \ref{sec2} we begin by reviewing the concept of barrier on shift spaces and the extended notions of local rule and sliding block code; these were introduced in \cite{rrv} as the platform to extend the classic Curtis--Hedlund--Lyndon theorem to cellular automata on arbitrary discrete alphabets (see \cite[Theorem 4]{rrv}), its translation to continuous and shift-commutating maps between shifts spaces is Theorem \ref{chl4}. Then two special types of barriers are introduced, they are linked to the maps characterized in Theorem \ref{chl4} and determine a wide subset of them on which sufficient conditions will be established for the fulfillment of the closed image property and reversibility. Additionally a crucial lemma is proved. Only after these preliminaries the main results of the article are established and proved: Theorem \ref{thm1}, Theorem \ref{thm2} and Theorem \ref{thm3}, this is precisely the content of the third and final section.

\medskip
\noindent 
{\bf Acknowledgements.}
The authors would like to thank the referee for his/her helpful comments and suggestions which led to the improvement of the manuscript. 
\section{The extended environment and technical preliminaries}\label{sec2}
First we recall that the product topology on the full shift $\mathcal{A}^{\mathbb{Z}}$ has as a basis the set of cylinders $C(h)=\{x\in\Ai^\mathbb{Z}: x\vert_{dom(h)}=h\}$, where $h$ is a $\Ai$-valued function with 
domain $dom(h)$ a finite subset of $\mathbb{Z}$ and $x\vert_{dom(h)}$ is the 
restriction of $x:\mathbb{Z}\to\Ai$ to $dom(h)$; we denote by $F(\mathcal{A})$ the set of such functions $h$ and by abuse we also say that $dom(h)$ is the domain of the cylinder $C(h)$. 
 
\begin{defi}
Let $X\subseteq\mathcal{A}^\mathbb{Z}$ be a shift space, a {\em barrier} on $X$ is 
a partition of $X$ whose elements $C_X(h)$ are 
of the form $X\cap C(h)$, they are called {\em cylinders} in $X$.
\end{defi}

This concept was inspired by the notion of barrier introduced by Nash-Williams \cite{nash} in his 
study on well-quasi-ordered sets; for this subject see also \cite{dipris} and
\cite{frai}. 

\smallskip

Some helpful observations about barriers on a shift space $X\subseteq\mathcal{A}^\mathbb{Z}$:

\smallskip
\noindent
a)
The set of barriers on $X$ is partially ordered by the {\em finer-than} relation: a barrier $\mathcal{B}'$ is {\em finer than} the barrier $\mathcal{B}$ if for every $B'\in\mathcal{B}'$ there is $B\in\mathcal{B}$ such that $B'\subseteq B$. When $\mathcal{B}'$ is finer than $\mathcal{B}$ it is also said that $\mathcal{B}'$ is a {\em refinement} of $\mathcal{B}$ or that $\mathcal{B}$ is coarse than $\mathcal{B}'$.
Given a barrier $\Bi$, it is always possible to get a finer one; to see that, observe that if 
$C_X(h)$ is a cylinder in $X$ and $\ell\notin dom(h)$, then $C_X(h)$ is the 
disjoint union of cylinders $C_X(h_a)$, where $h_a$ is the extension of $h$ 
to $dom(h)\cup\{\ell\}$ with $h_a(\ell)=a$ going through the set of 
symbols such that $C(h_a)\cap X\neq \emptyset$. In particular, if $\Bi$ is a 
barrier on $X$ such that for some interval $[i,j]\subset\mathbb{Z}$ the inclusion
$dom(h)\subset [i,j]$ holds for every $C_X(h)\in\Bi$, then there is a 
refinement $\Bi'$ of $\Bi$ such that $[i,j]$ is the domain of each cylinder in $\Bi'$.

\smallskip
\noindent
b)
Take integers $m,n,N$ with $N=m+n+1\geq 1$, the set of all allowed $N$-blocks $\mathcal{L}_N(X)$ can be identified with a barrier whose cylinders have the same domain $[-m,n]$; in fact, every allowed $N$-block 
$w=w_0\cdots w_{N-1}$ in 
$X$ is identified with the cylinder $C_X(h_w)$ where $h_w:[-m,n]\to\mathcal{A}$ 
and $h_w(-m+\ell)=w_\ell$ for all $0\leq \ell\leq N-1$.

\begin{defi}
Let $\mathcal{A}$ and $\mathcal{U}$ be alphabets.
An {\em $\mathcal{U}$-extended local rule} in the shift space $X\subseteq\mathcal{A}^\mathbb{Z}$ (or 
simply local rule in $X$) is any 
$\mathcal{U}$-valued function whose domain is a barrier on $X$.
Given a local rule $\varphi:\Bi\to\mathcal{U}$ in $X$, the {\em map induced by 
$\varphi$} is the transformation $\Psi:X\to\mathcal{U}^\mathbb{Z}$ 
defined by
\begin{equation}\label{extended1}
\Psi(x)_n=\varphi(C_X(h_{x,n})) \text{ for every $x\in X$ and $n\in\mathbb{Z}$},
\end{equation}
where $C_X(h_{x,n})$ is the cylinder in $\Bi$ containing $\sigma^n(x)$, 
$\sigma$ is the shift map on $\mathcal{A}^\mathbb{Z}$.
\end{defi}

We emphasize that if $\mathcal{B}$ is the barrier described in paragraph b) above, then the action in   
\eqref{extended1} matches that expressed in \eqref{e1}.
So, the map induced by an extended local rule extends the classical 
notion of sliding block code. From now on we use the term {\em extended sliding 
block code} (ESBC for short) for every transformation $\Psi$ as defined in \eqref{extended1}. 
In this extended setting the classical Curtis-Hedlund-Lyndon theorem \cite[Theorem 6.2.9]{lind} 
remains valid:

\begin{thm}[Curtis-Hedlund-Lyndon theorem]\label{chl4}
Let $\mathcal{A}$ and $\mathcal{U}$ be arbitrary discrete alphabets and $X\subseteq\mathcal{A}^\mathbb{Z}$ a shift space. A map 
from $X$ to $\mathcal{U}^{\mathbb{Z}}$ is a shift morphism if and only if it is an ESBC. 
\end{thm}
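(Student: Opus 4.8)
The plan is to prove the two implications separately, treating the construction of the inducing local rule as the crux. Throughout I write $\pi_k:\Ui^\Z\to\Ui$ for the projection onto the $k$-th coordinate and recall that, by the definition of the product topology, a map into $\Ui^\Z$ is continuous if and only if each of its coordinate maps is continuous; since $\Ui$ carries the discrete topology, a continuous coordinate map is precisely a \emph{locally constant} one. This observation is what lets me replace global continuity by a purely combinatorial statement about finite windows.

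First I would dispatch the implication ESBC $\Rightarrow$ shift morphism, which needs no compactness. Let $\Psi$ be induced by a local rule $\varphi:\Bi\to\Ui$. Shift-commutativity is immediate from \eqref{extended1}: the cylinder of $\Bi$ containing $\sigma^n(\sigma(x))=\sigma^{n+1}(x)$ is exactly the one governing $\Psi(x)_{n+1}$, so $\Psi(\sigma(x))_n=\Psi(x)_{n+1}=\sigma(\Psi(x))_n$. For continuity it suffices to check that each coordinate $x\mapsto\Psi(x)_n$ is locally constant. Fix $x$ and let $B=C_X(h)\in\Bi$ be the cylinder containing $\sigma^n(x)$; since $C(h)$ is open in $\Ai^\Z$ and $\sigma^n$ is continuous, there is a neighbourhood $W$ of $x$ with $\sigma^n(W)\subseteq C(h)$, whence every $y\in W\cap X$ satisfies $\sigma^n(y)\in X\cap C(h)=B$. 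Because $\Bi$ is a partition, $B$ is the cylinder containing $\sigma^n(y)$, so $\Psi(y)_n=\varphi(B)=\Psi(x)_n$.

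The substantive direction is shift morphism $\Rightarrow$ ESBC. Given a continuous shift-commuting $\Phi:X\to\Ui^\Z$, the key reduction is to the single coordinate map $f:=\pi_0\circ\Phi:X\to\Ui$, which is locally constant by the remark above. The idea is to manufacture from $f$ a barrier on which $f$ is constant, and then to let the shift-commutativity of $\Phi$ propagate the rule to every coordinate. For each $x\in X$ and $r\geq0$ let $C_r(x)=X\cap C(x\vert_{[-r,r]})$ be the central cylinder of radius $r$; local constancy of $f$ provides some $r$ with $f$ constant on $C_r(x)$, and since enlarging $r$ only shrinks the cylinder, the set of such $r$ is upward closed, so a \emph{minimal} radius $r(x)$ exists. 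I then set $B_x:=C_{r(x)}(x)$ and $\Bi:=\{B_x:x\in X\}$.

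The main obstacle, and the place where the non-compact setting genuinely bites, is to verify that $\Bi$ is a partition rather than a mere cover — this is exactly what the barrier formalism is designed to secure, the finite subcover argument of the classical proof being unavailable. I would show that $y\in B_x$ forces $B_y=B_x$: the inclusion $y\in B_x$ gives $y\vert_{[-r(x),r(x)]}=x\vert_{[-r(x),r(x)]}$, hence $C_{r(x)}(y)=C_{r(x)}(x)$ and $r(y)\leq r(x)$; a strict inequality $r(y)<r(x)$ would make $f$ constant on the central cylinder of $x$ of radius $r(y)$ — which coincides with $C_{r(y)}(y)$ since $y\in B_x$ — contradicting the minimality of $r(x)$, so $r(y)=r(x)$ and $B_y=B_x$. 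Thus distinct members of $\Bi$ are disjoint and $\Bi$ is a barrier. Defining $\varphi(B):=f(z)$ for any $z\in B$, which is well defined because $f$ is constant on each $B$, the induced ESBC $\Psi$ satisfies, for every $x$ and $n$, $\Psi(x)_n=\varphi(B_{\sigma^n(x)})=f(\sigma^n(x))=\Phi(\sigma^n(x))_0=\bigl(\sigma^n(\Phi(x))\bigr)_0=\Phi(x)_n$, using the shift-commutativity $\Phi\sigma^n=\sigma^n\Phi$ in the penultimate step. Hence $\Psi=\Phi$, so $\Phi$ is an ESBC, completing the proof.
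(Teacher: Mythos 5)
Your proposal is correct and follows essentially the same route as the paper: both directions reduce to the $0$-coordinate function, which is locally constant by discreteness of $\mathcal{U}$, a barrier is built from the level sets of $\Psi_0$ with $\Psi_0$ itself serving as the local rule, and shift-commutativity propagates the rule to every coordinate via $\Psi_n=\Psi_0\circ\sigma^n$. The only difference is one of detail: where the paper appeals to the fact that every nonempty open subset of $X$ is a disjoint union of cylinders (outsourced to the argument of Theorem 4 in \cite{rrv}), you prove the required instance of that fact directly through the minimal-radius construction $B_x=C_{r(x)}(x)$ together with the minimality argument showing that distinct $B_x$ are disjoint.
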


A proof of this theorem is easily obtained by paraphrasing  the proof of Theorem 4 in \cite{rrv}, which characterize the cellular automata on arbitrary discrete alphabets. We only highlight the following facts:

\smallskip
\noindent
$\bullet$
If $\Psi:X\to\mathcal{U}^\mathbb{Z}$ is the ESBC induced by $\varphi:\mathcal{B}\to\mathcal{U}$,
then it is a shift morphism. In fact, it is enough to observe that for all $n\in\mathbb{Z}$ the $n$-coordinate function $\Psi_n:X\to\mathcal{U}$ of $\Psi$ satisfies $\Psi_n=\Psi_0\circ \sigma^n$, and the $0$-coordinate function is constant in each cylinder of $\mathcal{B}$.

\smallskip
\noindent
$\bullet$
Let $\Psi:X\to\mathcal{U}^\mathbb{Z}$ be a morphism. Since in $X$ any nonempty open set is 
disjoint union of cylinders in $X$, it follows that the set of preimages 
$\Psi_0^{-1}(\{b\})$, with $b$ varying in $\Psi_0(X)$, determines a barrier on $X$ and 
$\Psi_0$ defines a local rule inducing $\Psi$; obviously $\Psi_0$ is constant 
in each cylinder of such a barrier. 

\begin{exa}\label{aux2}
Only to illustrate we return to 
the morphism $\Psi:\N^\mathbb{Z}\to\N^\mathbb{Z}$ defined by
\eqref{ejem1}. It is clear that the $0$-coordinate of 
$\Psi$ is given by 
$\Psi_0(x)=\displaystyle\sum_{|j|\leq x_0}x_j$ for all 
$x=(x_n)_{n\in\mathbb{Z}}\in\N^\mathbb{Z}$. Hence, $\Psi_0^{-1}(\{m\})=\left\{x\in\N^\mathbb{Z}: \sum_{|j|\leq x_0} x_j=m\right\}$ for all $m\in\mathbb{N}$.
In other words, if $w$ is a block with odd length and 
$C_w$ denotes the cylinder of all $x\in\N^\mathbb{Z}$ such that 
$x\vert_{[-k,k]}=w$, then the preimage $\Psi_0^{-1}(\{m\})$ is the disjoint 
union of the cylinders $C_w$, where $w=w_{-w_0}\cdots w_{0}\cdots w_{w_0}$ and 
$\displaystyle\sum_{|j|\leq w_0}w_j=m$. Therefore the collection $\Bi^0$ of cylinders 
$C_w$ with $m$ runs $\N$ is a barrier on $\N^\mathbb{Z}$ and the local rule 
$\varphi:\Bi^0\to\N$, with 
$\varphi(C_w)=\displaystyle\sum_{|j|\leq w_0}w_j$, induces $\Psi$. Note that $\Psi_0^{-1}(\{0\})=C_0=\{x\in\N^\mathbb{Z}: x_0=0\}$ and it is the union of the cylinders $C(h_v)$, where $v\in\mathbb{N}$ and $h_v:\{0,1\}\to\mathbb{N}$ is given by $h_v(0)=0$ and $h_v(1)=v$. 
If in $\mathcal{B}^0$ one substitutes  
$C_0$ by the collection of these new
cylinders $C(h_v)$, a finer barrier $\Bi^1$ is obtained and $\varphi$ acting on 
$\Bi^1$ also induces $\Psi$; clearly $\Psi_0(C(h_v))=0$ for all $v\in\mathbb{N}$. We also observe that $\Bi^0$ is the maximum of all the barriers on $\N^\mathbb{Z}$ inducing $\Psi$, this follows from the following fact: if $C(h)$ is a cylinder on $\mathbb{N}$ such that $\Psi_0$ is constant on $C(h)$, then there exist an $\mathbb{Z}$-interval $[-N,N]$ and a block $w\in\mathbb{N}^{2N+1}$ such that $[-N,N]\subseteq dom(h)$ and $h|_{[-N,N]}=w$.
\end{exa}

We now introduce two type of barriers and a particular class of sequences related to an ESBC, these notions are fundamentals for the rest of the article.

\begin{defi}
Let $\Psi:X\to\mathcal{U}^\mathbb{Z}$ be an ESBC.
\begin{enumerate}[i)]
\item 
A barrier $\Bi$ on $X$ is said to be {\em attached to} $\Psi$ if the $0$-coordinate function of $\Psi$ is constant 
on every cylinder of $\Bi$.
\item
If there exists a barrier $\mathcal{B}$ attached to $\Psi$ 
such that for every $\ell\in\Psi_0(X)$ it is finite the number of cylinders in $\mathcal{B}$ with $\Psi_0(B)=\ell$, then it is said both $\Psi$ and $\mathcal{B}$ are of {\em finite degree}.
\item
A sequence $(x^k)_{k\in\mathbb{Z}}\subset X$ is called {\em distinguished} for $\Psi$ if $(\Psi(x^k))_{k\in\mathbb{Z}}$ is convergent. The set of distinguished sequence for $\Psi$ is denoted by $X(\Psi)$. 
\end{enumerate}
\end{defi}

It is easy to see that a barrier $\Bi$ on $X$ is attached to an ESBC 
$\Psi:X\to\mathcal{U}^\mathbb{Z}$ if and only if it is induced by the local rule $\varphi:\Bi\to\mathcal{U}$ defined, 
for each $C\in\Bi$, by $\varphi(C)=\Psi_0(C)$. 
In this way, the barriers attached to an ESBC are the domains of the local rules inducing it. 

\smallskip

When the alphabets are finite every sliding block code is of finite degree.  
Not every barrier attached to a finite degree ESBC is finite degree, such is the case of the barriers $\mathcal{B}^0$ and $\mathcal{B}^1$ described in Example \ref{aux2}. On the other hand, not every ESBC is 
of finite degree, this is shown in the following example; it also shows that there exist distinguished sequences without convergent subsequences. 

\begin{exa}\label{nofinitedegree}
Let $\Psi:\N^\mathbb{Z}\to\N^\mathbb{Z}$ be the ESBC given by
$$
\Psi(x)_j=x_{j-x_j}+x_{j+x_j},\,\text{ for all $x\in\N^\mathbb{Z}$ and every $j\in\mathbb{Z}$}.
$$
Let us see that $\Psi$ is not of finite degree. First, take $\ell,m,a,b\in\mathbb{N}$ such that $a+b=\ell$; for this collection of natural numbers define $h:\{-m,0,m\}\to\mathbb{N}$ by $h(-m)=a,h(0)=m$ and $h(m)=b$. It is clear that for every $x\in C(h)$ one has $\Psi_0(x)=\ell$, so the set of cylinders thus defined constitutes a barrier $\mathcal{B}$ attached to $\Psi$; also observe that $\mathcal{B}$ is not of finite degree. Now take any cylinder $C(h')$ such that for some $\ell\in\mathbb{N}$ the equality $\Psi_0(x)=\ell$ holds for each $x\in C(h')$. Suppose that $0\notin dom(h')$ and pick $m\notin dom(h')\cup\{0\}$, so the $m$-coordinate of $x\in C(h')$ with $x_0=m$ is restricted to $x_{-m}+x_m=\ell$; this contradicts the fact that the $m$-coordinate of points in $C(h')$ varies throughout $\mathbb{N}$ even when the $0$-coordinate is $m$. Thus $0$ necessarily belongs to $dom(h')$ and so $x_{-h'(0)}+x_{h'(0)}=\ell$ for all $x\in C(h')$, which forces that $-h'(0),h'(0)\in dom(h')$. Consequently $C(h')$ is contained in some cylinder $C(h)$ as described above; therefore every barrier attached to $\Psi$ is a refinement of $\mathcal{B}$, which clearly implies that $\Psi$ is not of finite degree.

Next, it is not hard to see that $(x_n)_{n\in\mathbb{Z}}=1^\mathbb{Z}$ (i.e. $x_n=1$ 
for all $n\in\mathbb{Z}$) has no preimage under $\Psi$. Now, for every integer $k\geq 1$ consider 
$x^k=(x_j^k)_{j\in\mathbb{Z}}\in\mathbb{N}^\mathbb{Z}$ defined for each $j\in\mathbb{Z}$ as follows:
$$
x_j^k=
\begin{cases}
3k+1-j,\,\text{ if $-k\leq j\leq k$}\\
0,\,\text{ if $j\geq k+1$}\\
1,\,\text{ if $j\leq -k-1$}
\end{cases}.
$$
After a direct calculation one has
$\Psi(x^k)\vert_{[-k,k]}=1^{2k+1}$ for all $k\geq 0$, therefore
$\Psi(x^k)\to 1^\mathbb{Z}$ when $k\to+\infty$; this shows that the sequence 
$(x^k)_{k\geq 1}$ is distinguished for $\Psi$ but it has no 
convergent subsequences.   
\end{exa}

In what follows, a sequence of elements in a shift space will be called 
{\em nice} if it has convergent subsequences. 
An immediate fact is the following:

\smallskip
\noindent
{\em If $\Psi:X\to\mathcal{U}^\mathbb{Z}$ is an ESBC and every distinguished sequence by $\Psi$ is  
nice, then $\Psi$ has the closed imagen property; i.e. $\Psi(X)$ is a shift space}.

\smallskip

We accentuate that 
the converse of this assertion is false, this is shown in Example \ref{exam2} below.
By restricting our attention to ESBC of finite degree, we will give sufficient conditions more technical than the totality of distinguished nice sequences for an ESBC to have the closed image property. Furthermore, these same conditions will guarantee the reversibility of bijective ESBC. For this purpose the following lemma is crucial.
Before we need to introduce some notations to make the sentences more readable. First we recall that $F(\mathcal{A})$ is the set of all $\mathcal{A}$-valued functions whose domains are finite subsets of $\mathbb{Z}$. 
Let $X\subseteq\mathcal{A}^\mathbb{Z}$ be a shift space and $\sigma$ the shift map on $\mathcal{A}^\mathbb{Z}$. Observe that
for any $n\in\mathbb{Z}$ and every cylinder $C(h)$ in $\mathcal{A}^\mathbb{Z}$, $\sigma^{-n}(C(h))$ is also 
a cylinder, we denote it by $C(h^{[n]})$ where $h^{[n]}$ is the $n$-translation of $h$, that is:
$$
dom(h^{[n]})=dom(h)+n\,\text{ and $h^{[n]}(j+n)=h(j)$ for all $j\in dom(h)$}.
$$
On the other hand, it is easy to see that
if $\{h_\alpha\}_{\alpha\in\Gamma}$ is a collection in $F(\mathcal{A})$ satisfying  
$\bigcap_{\alpha\in\Gamma} C(h_\alpha)\neq\emptyset$, then $h_\alpha(m)=h_\beta(m)$ holds for all $\alpha,\beta\in\Gamma$ and every integer $m$ in $dom(h_\alpha)\cap dom(h_\beta)$. 
It allows to define $\bigoplus_{\alpha\in\Gamma} 
h_\alpha: \bigcup_{\alpha\in\Gamma} dom(h_\alpha)\to\mathcal{A}$ by
$$
\left(\bigoplus\nolimits_{\alpha\in\Gamma}h_\alpha\right)(m)=h_\alpha(m)\,\text
{ whenever $m\in dom(h_\alpha)$}.
$$
Obviously this new function belongs to $F(\mathcal{A})$ if and only if 
$\bigcup_{\alpha\in\Gamma} dom(h_\alpha)$ is finite; in this case 
$\bigcap_{\alpha\in\Gamma} C(h_\alpha)$ is just the 
cylinder $C(\bigoplus_{\alpha\in\Gamma}h_\alpha)$. 
A particular case of this kind of collection is a sequence of functions $h_n$ in $F(\mathcal{A})$ such that $h_{n+1}$ extends $h_n$.
Anyway, it is clear that for any collection $\{h_\alpha\}_{\alpha\in\Gamma}\subset F(\mathcal{A})$
$$
\bigcap\nolimits_{\alpha\in\Gamma} C(h_\alpha)=\{x\in\mathcal{A}^\mathbb{Z}:\text{ for all 
$m\in\mathbb{Z}$, $x_m=h_\alpha(m)$ if $m\in dom(h_\alpha)$}\}.
$$

An additional notation, let $\Psi:X\to\mathcal{U}^\mathbb{Z}$ be an ESBC and $\mathcal{B}$ a barrier attached to it. For each symbol $\ell\in\Psi_0(X)$, $\mathcal{B}(\ell)$ denotes the set of cylinders $B$ in $\mathcal{B}$ such that $\Psi_0(B)=\ell$. Obviously $\mathcal{B}$ is of finite degree if the cardinal of $\mathcal{B}(\ell)$ is finite for all $\ell\in\Psi_0(X)$.

\begin{lemma}[Crucial lemma]\label{aux3}
If $\Psi:X\to\mathcal{U}^\mathbb{Z}$ is an ESBC of finite 
degree, then for all integer $\ell\geq 0$ and every $(x^k)_{k\geq 1}\in X(\Psi)$ with $\Psi(x^k)\to y$ 
when $k\to+\infty$, there exist 
an infinite subset $S_\ell$ of $\mathbb{N}$ and a cylinder 
$C_X(h_\ell)$ such that:
$S_{\ell+1}\subset S_\ell$,
$C_X(h_{\ell+1})\subset C_X(h_{\ell})$,
$x^k\in C_X(h_{\ell})$ for all $k\in S_\ell$ and 
$\Psi(z)\vert_{[-\ell,\ell]}=y\vert_{[-\ell,\ell]}$ for every 
$z\in C_X(h_{\ell})$.
\end{lemma}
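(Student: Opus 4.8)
The plan is to build the pairs $(S_\ell, C_X(h_\ell))$ for all $\ell \geq 0$ at once, by induction on $\ell$, using the finite-degree hypothesis to run a pigeonhole argument at each newly controlled coordinate. Throughout I fix a barrier $\mathcal{B}$ attached to $\Psi$ which is of finite degree, so that each set $\mathcal{B}(a)$ is finite, and I exploit the identity $\Psi(x)_n = \Psi_0(\sigma^n(x))$ together with the fact that $\Psi_0$ is constant on every cylinder of $\mathcal{B}$. The first thing I would record is that convergence $\Psi(x^k)\to y$ in the Cantor metric means precisely that for each $\ell \geq 0$ there is an index $K_\ell$ with $\Psi(x^k)\vert_{[-\ell,\ell]} = y\vert_{[-\ell,\ell]}$ for all $k \geq K_\ell$; in particular every $y_n$ lies in $\Psi_0(X)$. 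The second is a translated-preimage formula: for $a \in \Psi_0(X)$ and $n \in \mathbb{Z}$ one has $\Psi(x)_n = a$ if and only if $x \in \bigcup_{B \in \mathcal{B}(a)} \sigma^{-n}(B)$, and since $\sigma^{-n}(C_X(h)) = C_X(h^{[n]})$, this is a \emph{finite} union of cylinders in $X$.

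For the base case $\ell = 0$, note that for $k \geq K_0$ the point $x^k$ belongs to $\bigcup_{B \in \mathcal{B}(y_0)} B$, a finite union; by the pigeonhole principle some single cylinder $B_0 \in \mathcal{B}(y_0)$ contains $x^k$ for infinitely many $k$. Taking $S_0$ to be this infinite index set and $C_X(h_0) = B_0$, the constancy of $\Psi_0$ on $B_0$ gives $\Psi(z)_0 = y_0$ for every $z \in C_X(h_0)$, which is exactly the required statement for $\ell = 0$.

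For the inductive step, assume $S_\ell$ and $C_X(h_\ell)$ have been built. The two coordinates to control are $\ell+1$ and $-(\ell+1)$. Applying the translated-preimage formula at $n = \ell+1$, for $k \in S_\ell$ with $k \geq K_{\ell+1}$ the point $x^k$ lies in the finite union $\bigcup_{B \in \mathcal{B}(y_{\ell+1})} \sigma^{-(\ell+1)}(B)$, so pigeonhole yields one translated cylinder $C_X(g^{+})$ containing $x^k$ for infinitely many such $k$, on which $\Psi(z)_{\ell+1} = y_{\ell+1}$. Repeating this inside the resulting index set, now at $n = -(\ell+1)$, produces a cylinder $C_X(g^{-})$ and an infinite subset $S_{\ell+1} \subseteq S_\ell$ with $\Psi(z)_{-(\ell+1)} = y_{-(\ell+1)}$ on $C_X(g^{-})$ and $x^k \in C_X(g^{-})$ for $k \in S_{\ell+1}$. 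I then set $C_X(h_{\ell+1}) = C_X(h_\ell) \cap C_X(g^{+}) \cap C_X(g^{-})$; this intersection contains $x^k$ for $k \in S_{\ell+1}$, hence is nonempty, and by the $\bigoplus$ remark it equals $C_X(h_\ell \oplus g^{+} \oplus g^{-})$, so one may take $h_{\ell+1} = h_\ell \oplus g^{+} \oplus g^{-}$. The inclusions $C_X(h_{\ell+1}) \subseteq C_X(h_\ell)$ and $S_{\ell+1} \subset S_\ell$ hold by construction, and combining the inductive hypothesis on $[-\ell,\ell]$ with membership in $C_X(g^{\pm})$ yields $\Psi(z)\vert_{[-(\ell+1),\ell+1]} = y\vert_{[-(\ell+1),\ell+1]}$ for all $z \in C_X(h_{\ell+1})$, closing the induction.

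The step I expect to be the main obstacle is ensuring the pigeonhole is legitimate at every coordinate: this is exactly where the finite-degree hypothesis is indispensable, since it is what forces each translated preimage $\bigcup_{B \in \mathcal{B}(a)} \sigma^{-n}(B)$ to be a finite union of cylinders. The remaining work is bookkeeping—transporting the structure of $\mathcal{B}$ from coordinate $0$ to coordinate $n$ through $\sigma^{-n}$, and checking that the successive intersections stay cylinders via the $\bigoplus$ operation—but no genuine analytic difficulty arises beyond the finiteness that drives the pigeonhole.
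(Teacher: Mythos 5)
Your proposal is correct and follows essentially the same argument as the paper's proof: fix a finite-degree barrier, use Cantor-metric convergence to locate tails of the sequence in the finite unions $\bigcup_{B\in\mathcal{B}(y_j)}\sigma^{-j}(B)$, apply pigeonhole at each coordinate to extract nested infinite index sets, and assemble $h_\ell$ as the $\bigoplus$ of the translated cylinder functions $h_{y_j,j}^{[j]}$. The only difference is presentational—you make explicit the pigeonhole step and the translated-preimage formula that the paper leaves implicit in its ``one can choose'' phrasing.
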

\begin{proof}
Take a barrier $\Bi$ attached to $\Psi$ of finite degree. Let $(x^k)_{k\geq 1}$ be a sequence in $X(\Psi)$ with $\Psi(x^k)\to y$ if $k\to+\infty$.  
From the definition of the Cantor metric it is clear that
for each natural number $\ell\geq 0$ there is 
$N_\ell\geq 1$ such that 
$x^k\in\Psi_j^{-1}(y_j)$ for all $|j|\leq\ell$ and every $k\geq N_\ell$, where $y=(y_j)_{j\in\mathbb{Z}}$. In 
particular, for $\ell=0$ one can 
choose a cylinder $C_X(h_{y_0,0})\in\Bi(y_0)$ and an infinite subset $S_0$ of 
$\N$ such that $x^k\in C_X(h_{y_0,0})$ for all $k\in S_0$ and $\Psi_0(z)=y_0$ 
for each $z\in C_X(h_{y_0,0})$. Further, for $\ell=1$ there are cylinders 
$C_X(h_{y_{-1},-1})\in\Bi(y_{-1})$ and $C_X(h_{y_{1},1})\in\Bi(y_{1})$ and there exists 
an infinite set $S_1\subset S_0$ in such a way that
$$
x^k\in C_X(h_{y_1,1}^{[1]})\cap C_X(h_{y_0,0})\cap 
C_X(h_{y_{-1},-1}^{[-1]})\,\text{ for all $k\in S_1$}.
$$
Observe that this intersection is the cylinder $C_X(h_1)$ where 
$h_1=\bigoplus_{|j|\leq 1}h_{y_j,j}^{[j]}$ and $h_{y_0,0}^{[0]}=h_{y_0,0}$; also note that 
$\Psi(z)\vert_{[-1,1]}=y\vert_{[-1,1]}$ for all $z\in C_X(h_1)$. 
Proceeding by recurrence, in each step $\ell\geq 1$ one can select 
an infinite set $S_\ell\subset S_{\ell-1}$ and cylinders
$C_X(h_{y_{-\ell},-\ell})\in\Bi(y_{-\ell})$ and  $C_X(h_{y_{\ell},\ell})\in\Bi(y_{\ell})$ such that 
$$
x^k\in\bigcap_{|j|\leq\ell}C_X(h_{y_j,j}^{[j]}),\,\text{ for all $k\in 
S_\ell$}. 
$$
The proof finishes by making $h_\ell=\bigoplus_{|j|\leq \ell}h_{y_j,j}^{[j]}$ 
and observing that $\Psi(z)\vert_{[-\ell,\ell]}$ matches $y\vert_{[-\ell,\ell]}$ for all $z$ in the cylinder
$C_X(h_\ell)=\bigcap_{|j|\leq\ell}C_X(h_{y_j,j}^{[j]})$. 
Notice that the sequence of cylinders $C_X(h_{\ell})$ is decreasing because 
the function $h_{\ell+1}$ extends $h_\ell$ for every $\ell\geq 0$ and 
$h_0=h_{y_0,0}$. 
\end{proof}

\begin{rk}\label{r22}
Let $\Psi:X\to\mathcal{U}^\mathbb{Z}$ be an ESBC and $\mathcal{B}$ a barrier attached to $\Psi$ of finite degree. With the notations of the preceding lemma we observe that if 
$(x^k)_{k\geq 1}$ is a sequence in $X(\Psi)$ and $\ell$ is a natural number, then both the 
function $h_\ell$ and the set $S_\ell$ may be not unique; they depend on the barrier and the limit of the sequence $(\Psi(x^k))_{k\geq 1}$.
Nonetheless, for any 
chosen sequence $(h_\ell)_{\ell\geq 0}$ one has that 
$\bigcap_{\ell\geq 0}C(h_\ell)\neq\emptyset$, and so the function
$h_\infty:=\bigoplus_{\ell\geq 0}h_\ell$ is well defined; obviously 
$x$ belongs to $\bigcap_{\ell\geq 0}C(h_\ell)$ if, and only if,
$x\vert_{dom(h_\infty)}=h_\infty$. It is also clear that although 
$C_X(h_\ell)\neq\emptyset$ for all $\ell\geq 0$, it may happen that 
$\bigcap_{\ell\geq 0}C_X(h_\ell)=\emptyset$. On the other hand, 
if $\bigcap_{\ell\geq 0}C_X(h_\ell)\neq\emptyset$, every point of this set  
is mapped by $\Psi$ on the limit of the sequence $(\Psi(x^k))_{k\geq 1}$. 
Clearly in this case
$$
\bigcap_{\ell\geq 0}C_X(h_\ell)=\{x\in X: x\vert_{dom(h_\infty)}=h_\infty\}.
$$

After this discussion, we say that an $\mathcal{A}$-valued function $h_\infty$ is {\em associated} to a sequence in $X(\Psi)$ and a barrier attached to $\Psi$ of finite degree whenever it is obtained as described above.  
Although this association may be multivalued: different functions $h_\infty$ may be associated to a same distinguished sequence or barrier, all of them are 
classified into five exclusive and exhaustive classes, according to how their domains are: 

\begin{enumerate}[$C1$.]
\item
$dom(h_\infty)$ is bounded. 

\item
$dom(h_\infty)=\mathbb{Z}$.

\item
$dom(h_\infty)$ is bilaterally unbounded, that is
$dom(h_\infty)\varsubsetneq\mathbb{Z}$ and for all $N>0$ 
there are $a,b\in dom(h_\infty)$ such that $a<-N$ and $b>N$.

\item
$dom(h_\infty)$ is left-unbounded, that means that there is 
$M\in\mathbb{Z}$ such that $(-\infty,M]$ is the minimal interval 
containing $dom(h_\infty)$.

\item
$dom(h_\infty)$ is right-unbounded: there is 
$m\in\mathbb{Z}$ such that $[m,+\infty)$ is the minimal interval 
containing $dom(h_\infty)$.
\end{enumerate}
\end{rk}
\section{Statements and proofs of the main results}
Only now are we able to state and demonstrate the main results of this article, that is the goal of this final section.

\subsection{On the closed image property of ESBC}
Let $X\subseteq\mathcal{A}^\mathbb{Z}$ be a shift space and $\Psi:X\to\mathcal{U}^\mathbb{Z}$ an ESBC. Since $\Psi$ is shift-commuting, it is clear that the image set $\Psi(X)$ is a 
shift space if and only if $\Psi(X)$ is a closed subset 
$\mathcal{U}^\mathbb{Z}$; that is, the limit of the image of every distinguished sequence for $\Psi$ belongs to $\Psi(X)$.

\smallskip

Our first main result is related to the classes $C1$ and $C2$.

\begin{thm}\label{thm1}
Let $\Psi:X\to\mathcal{U}^\mathbb{Z}$ an ESBC of finite degree. If a function $h_\infty$ in $C1\cup C2$ is associated to each sequence in $X(\Psi)$, then $\Psi(X)$ is a 
shift space. 
\end{thm}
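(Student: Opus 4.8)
The plan is to use the reduction recorded just before the statement: since $\Psi$ is shift-commuting, $\Psi(X)$ is a shift space exactly when it is closed, i.e. when the limit $y$ of $(\Psi(x^k))_{k\geq 1}$ belongs to $\Psi(X)$ for every distinguished sequence $(x^k)_{k\geq 1}\in X(\Psi)$. So I fix such a sequence with $\Psi(x^k)\to y$ and aim to produce a point $x\in X$ with $\Psi(x)=y$. Applying the Crucial Lemma \ref{aux3} I obtain the infinite sets $S_\ell$ and the decreasing cylinders $C_X(h_\ell)$ with $\Psi(z)\vert_{[-\ell,\ell]}=y\vert_{[-\ell,\ell]}$ for all $z\in C_X(h_\ell)$, and from Remark \ref{r22} the function $h_\infty=\bigoplus_{\ell\geq 0}h_\ell$; by hypothesis the whole construction can be chosen so that $h_\infty\in C1\cup C2$. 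The crux is to establish that $\bigcap_{\ell\geq 0}C_X(h_\ell)\neq\emptyset$, because any point $x$ of this intersection lies in every $C_X(h_\ell)$ and hence satisfies $\Psi(x)\vert_{[-\ell,\ell]}=y\vert_{[-\ell,\ell]}$ for all $\ell$, forcing $\Psi(x)=y$ and thus $y\in\Psi(X)$.

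For a function in class $C1$ the domain $dom(h_\infty)=\bigcup_{\ell\geq 0}dom(h_\ell)$ is bounded, hence finite; since the domains increase (as $h_{\ell+1}$ extends $h_\ell$), the sequence $\big(dom(h_\ell)\big)_\ell$ must stabilize, so there is $L$ with $h_\ell=h_\infty$ for all $\ell\geq L$. Therefore $\bigcap_{\ell\geq 0}C_X(h_\ell)=C_X(h_L)=C_X(h_\infty)$, which is nonempty because each $C_X(h_\ell)$ contains $x^k$ for $k\in S_\ell$ by Lemma \ref{aux3}. Any point of this cylinder maps to $y$, as noted above.

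For a function in class $C2$ one has $dom(h_\infty)=\mathbb{Z}$, so $h_\infty$ is a genuine element of $\mathcal{A}^\mathbb{Z}$ and $\bigcap_{\ell\geq 0}C(h_\ell)=\{x:x\vert_{dom(h_\infty)}=h_\infty\}=\{h_\infty\}$; consequently $\bigcap_{\ell\geq 0}C_X(h_\ell)=X\cap\{h_\infty\}$, and everything reduces to proving $h_\infty\in X$. Here I invoke the closedness of $X$: for each $\ell$ pick $w^\ell\in C_X(h_\ell)\subseteq X$ (possible since the cylinder is nonempty); by nestedness $w^\ell$ agrees with $h_\infty$ on $dom(h_\ell)$, and because the increasing domains exhaust $\mathbb{Z}$, for any window $[-K,K]$ the equality $w^\ell\vert_{[-K,K]}=h_\infty\vert_{[-K,K]}$ holds for all large $\ell$. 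Thus $w^\ell\to h_\infty$ in the Cantor metric and $h_\infty\in X$, whence $h_\infty\in C_X(h_\ell)$ for every $\ell$ and $\Psi(h_\infty)=y$.

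The main obstacle is exactly the step $\bigcap_{\ell\geq 0}C_X(h_\ell)\neq\emptyset$, and it is what forces the restriction to $C1\cup C2$: when $dom(h_\infty)$ is unbounded yet proper (classes $C3$--$C5$) the noncompactness of $X$ can prevent the nested nonempty cylinders from sharing a common point, so the intersection may be empty and $y$ need not lie in $\Psi(X)$, precisely the phenomenon exhibited in Example \ref{noimage}. Classes $C1$ and $C2$ are the two regimes in which this failure is excluded, by stabilization of the domain and by closedness of $X$ respectively; no further use of the finite-degree hypothesis is needed beyond its role inside Lemma \ref{aux3}.
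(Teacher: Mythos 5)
Your proof is correct and takes essentially the same route as the paper's: in class $C1$ the nested domains stabilize, so the intersection is the nonempty cylinder $C_X(h_L)$ all of whose points map to $y$, while in class $C2$ the closedness of $X$ forces $h_\infty\in X$ and $\Psi(h_\infty)=y$. The only cosmetic differences are that you approximate $h_\infty$ by arbitrary points $w^\ell\in C_X(h_\ell)$ instead of the subsequence $x^{k_M}$ used in the paper, and you conclude via the window property $\Psi(z)\vert_{[-\ell,\ell]}=y\vert_{[-\ell,\ell]}$ of Lemma \ref{aux3} rather than via continuity of $\Psi$; both steps are equivalent here.
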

\begin{proof}
Take any point $y$ in the closure of $\Psi(X)$. Let 
$(y^k)_{k\geq 1}$ be a sequence in $\Psi(X)$ such that $y^k\to y$ when 
$k\to+\infty$.
Pick $(x^k)_{k\geq 1}\subset X$ with $\Psi(x^k)=y^k$ for every 
$k\geq 1$; obviously $(x^k)_{k\geq 1}$ belongs to $X(\Psi)$. 
Consider a barrier attached to $\Psi$ of finite degree in such a way that 
for sequences $(h_\ell)_{\ell\geq 0}$, $(S_\ell)_{\ell\geq 0}$ as in Lemma 
\ref{aux3}, the corresponding function $h_\infty\in C1\cup C2$. First we
assume that $h_\infty$ belongs to the class $C2$, that is $dom(h_\infty)=\mathbb{Z}$; from this same lemma,  
strictly increasing sequences of integers $(\ell_M)_{M\geq 1}$ and 
$(k_M)_{M\geq 1}$ can be selected such that $[-M,M]\subset dom(h_{\ell_M})$, 
$k_M\in S_{\ell_M}$ and $x^{k_M}\to h_\infty$ when $M\to+\infty$. So, the 
continuity of $\Psi$ implies that $y\in\Psi(X)$. Now we suppose that
$h_\infty$ belongs to the class $C1$. As $h_{\ell+1}$ extends 
$h_\ell$ for all $\ell\geq 0$ and 
$dom(h_\infty)=\bigcup_{\ell\geq 0}dom(h_\ell)$, there is $L\geq 0$ such that 
$h_\ell=h_L$ for all $\ell\geq L$. Thus 
$\bigcap_{\ell\geq 0}C_X(h_\ell)$ is the cylinder $C_X(h_L)$, again from 
Lemma \ref{aux3} one deduces that
$\Psi(z)=y$ for all $z\in C_X(h_L)$.
\end{proof}

\begin{rk}\label{rk3}
From the proof of the preceding theorem it is clear that if $h_\infty\in C2$, 
then the sequence $(x^k)_{k\geq 1}$ in $X(\Psi)$ inducing it is nice; however, 
the same cannot be said if $h_\infty\in C1$ (see Example \ref{exam2} below); 
even so, in this case, the limit of $(\Psi(x^k))_{k\geq 1}$ belongs to 
$\Psi(X)$. We notice that if $\Psi$ is injective and $h_\infty\in C1$, then 
$(x^k)_{k\geq 1}$ is nice; indeed, it has a constant subsequence.

\smallskip

We also would like to highlight that 
if $\Psi:X\to\mathcal{U}^\mathbb{Z}$ is an ESBC and the domains of the cylinders in a barrier $\Bi$ attached to $\Psi$  have a common point, then every function $h_\infty$ associated to any sequence in $X(\Psi)$ belongs to the class $C2$. This 
follows from the next fact: 
if $m\in dom(h)$ for all $h$ with $C_X(h)\in\Bi$, then for all 
$\ell\geq 0$ the domain of $h_\ell$ contains 
the $\mathbb{Z}$-interval $[-\ell+m,m+\ell]$. 
\end{rk}

\begin{exa}\label{exam2}
Let $X$ denote the shift space of all sequences $x=(x_n)_{n\in\mathbb{Z}}$ in $\N^\mathbb{Z}$ 
such that $0$ appears in $x$ and $x_n\neq x_m$ for all $n\neq m$. For every 
$x\in X$, let $0(x)$ be the integer where $0$ appears in $x$. Define 
$\Psi:X\to\mathbb{Z}^\mathbb{Z}$ 
by $\Psi(x)_n=0(x)-n$ for all $n\in\mathbb{Z}$ and every $x\in X$. It is easy to check that $\Psi$ is a 
shift morphism. Observe that if
$C_X(f_n)$ is the cylinder of all $x\in X$ such 
that $0(x)=n$, then 
$\mathcal{B}=\{C_X(f_n):n\in\mathbb{Z}\}$ is a barrier attached to $\Psi$ of finite degree; indeed, for every $n\in\mathbb{Z}$ one has 
$\Psi_0^{-1}(n)=C_X(f_n)$ and $\Psi(C_X(f_n))$ is the singleton $\{y^n\}$, where $y^n=(y^n_m)_{m\in\mathbb{Z}}$ 
with $y^n_m=n-m$. In this way, the image set $\Psi(X)$ is the discrete set $\{y^n:n\in\mathbb{Z}\}$ which is 
clearly a shift space. 
In addition, for every $(x^k)_{k\geq 1}\in X(\Psi)$, the sequence $(\Psi(x^k))_{k\geq 1}$ is eventually constant. Therefore, if one follows the indications in Lemma \ref{aux3} and Remark \ref{r22} to construct $h_\infty$ from the barrier $\mathcal{B}$, one obtains that for every distinguished sequence 
for $\Psi$ there exists $n\in\mathbb{Z}$ such that the function $h_\infty$ has domain $\{n\}$ and $h_\infty(n)=0$. Obviously in this case the function $h_\infty$ is unique, it belongs to the class $C1$ and  
there are infinitely many no nice distinguished sequences for $\Psi$. 

\smallskip

Extra informations about this shift space $X$:
$\mathcal{L}_1(X)=\N$ and for all $a,b\in\N$ with $a\neq b$, both $ab$ and $ba$ are allowed blocks in $X$.
\end{exa}

In our search of sufficient conditions to guarantee the closed image property for 
finite degree ESBC it remains to examine the cases when the function $h_\infty$ is in some of the classes $C3, C4$ or $C5$. For this end, we only 
deal with shift spaces having certain finiteness properties on the allowed 
symbols.

\begin{defi}\label{finite}
A shift space $X\subset\mathcal{A}^\mathbb{Z}$ is said to be:
\begin{enumerate}[a)]
\item
{\em right-finite} if for all 
$a\in\mathcal{L}_1(X)$, the set $\{b\in\mathcal{A}: ab\in\mathcal{L}(X)\}$ is finite.
\item
{\em left-finite} if for all 
$a\in\mathcal{L}_1(X)$, the set $\{b\in\mathcal{A}: ba\in\mathcal{L}(X)\}$ is finite. 
\item
{\em bilaterally-finite} if it is both right-finite and left-finite.
\end{enumerate}
\end{defi}

In \cite{ott} it is introduced the 
term row-finite shift with the same meaning of right-finite one; see also 
\cite{sobottaka} where is also introduced the notion of column-finite shift.  

\smallskip

For the next lemma we assume that $\Psi:X\to\mathcal{U}^\mathbb{Z}$ is an ESBC, $\mathcal{B}$ is a barrier attached to $\Psi$ of finite degree, $(x^k)_{k\geq 1}$ is a sequence in $X(\Psi)$ and $h_\infty$ is a 
function associated to $(x^k)_{k\geq 1}$ and $\mathcal{B}$.

\begin{lemma}\label{p1}
For the sequence $(x^k)_{k\geq 1}$ to be nice it 
is sufficient that one of the following conditions holds:
\begin{enumerate}[a)]
\item
$h_\infty\in C3$ and $X$ is right-finite or left-finite.
\item
$h_\infty\in C4$ and $X$ is right-finite.
\item
$h_\infty\in C5$ and $X$ is left-finite.
\end{enumerate}
\end{lemma}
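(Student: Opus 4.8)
The plan is to extract a convergent subsequence of $(x^k)_{k\geq 1}$ by combining two ingredients: the coordinatewise stabilization inside $dom(h_\infty)$ that comes for free from Lemma \ref{aux3}, together with a finite-branching propagation governed by the hypothesis of right- or left-finiteness. Throughout, I would write $D=dom(h_\infty)=\bigcup_{\ell\geq 0}dom(h_\ell)$ and $y=\lim_k\Psi(x^k)$, and recall the nested infinite sets $S_\ell$ and cylinders $C_X(h_\ell)$ furnished by Lemma \ref{aux3}.

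First I would record the \emph{anchors}. Choosing indices $k_\ell\in S_\ell$ strictly increasing (a diagonal selection over the nested family of Lemma \ref{aux3}) produces a subsequence $(z^j)_{j}$ of $(x^k)$ with the following property: for every $n\in D$ one has $z^j_n=h_\infty(n)$ for all large $j$. Indeed, if $n\in dom(h_{\ell_0})$ then $z^j\in C_X(h_{\ell_0})$ as soon as $j\geq\ell_0$, since $S_\ell\subset S_{\ell_0}$ for $\ell\geq\ell_0$. Thus every coordinate sitting in $D$ is eventually constant along $(z^j)$; the only remaining task is to stabilize the coordinates lying outside $D$.

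This is where the three cases and the finiteness hypotheses enter, and it is the heart of the argument. The key observation is that in classes $C3$ and $C5$ one has $\sup D=+\infty$, while in $C3$ and $C4$ one has $\inf D=-\infty$; hence in each case there are anchor positions of $D$ arbitrarily far on (at least) one side, and on the opposite side the finiteness hypothesis limits the branching. Concretely, suppose $X$ is left-finite and $\sup D=+\infty$ (this covers case c) and the left-finite alternative of a)). Fixing a window $[-N,N]$, I would pick $n'\in D$ with $n'>N$; for large $j$ the symbol $z^j_{n'}=h_\infty(n')$ is fixed, and since $z^j_{n'-1}z^j_{n'}\in\mathcal{L}(X)$ with $X$ left-finite, $z^j_{n'-1}$ ranges over a finite set, so it takes a common value along an infinite set of indices. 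Iterating from $n'$ down to $-N$ refines to an infinite index set on which the whole window $[-N,N]$ is constant. I would organize this inside a nested family $U_0\supseteq U_1\supseteq\cdots$ of infinite index sets, $U_N$ stabilizing $[-N,N]$, and then take a diagonal $j_N\in U_N$; the resulting subsequence converges coordinatewise, hence in $\mathcal{A}^\mathbb{Z}$, and its limit lies in the closed set $X$. This proves $(x^k)$ is nice. The case $X$ right-finite with $\inf D=-\infty$ (covering b) and the right-finite alternative of a)) is entirely symmetric, propagating rightward from an anchor $n'<-N$ by means of right-finiteness.

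The main obstacle, and the point needing the most care, is precisely the matching between the unbounded side of $D$ and the direction of propagation: one must push the finite-branching control from a stabilized anchor coordinate across the coordinates outside $D$, and this succeeds only when the anchors lie on the side \emph{opposite} to the direction in which finiteness constrains neighbors. This is exactly why $C4$ must be paired with right-finiteness and $C5$ with left-finiteness, whereas the bilaterally unbounded class $C3$ tolerates either hypothesis. A secondary technical point is to keep the successive index-set refinements properly nested, so that the final diagonal subsequence converges on every window simultaneously.
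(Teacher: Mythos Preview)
Your argument is correct and rests on the same mechanism as the paper's proof: use the coordinates in $dom(h_\infty)$ as anchors, propagate from them to the remaining coordinates via the finite-branching given by right/left-finiteness, and extract a convergent subsequence by a diagonal selection. The only cosmetic difference is in packaging: the paper ``upgrades'' the pairs $(h_\ell,S_\ell)$ in place---filling the gaps of $dom(h_\ell)$ so that each becomes a full interval $[a_\ell,b_\ell]$ with $\bigcup_\ell[a_\ell,b_\ell]=\mathbb{Z}$---thereby reducing to the class $C2$ already handled in Theorem~\ref{thm1}, whereas you run an explicit two-stage diagonal (first along the $S_\ell$'s to fix $D$, then a further nested refinement $U_0\supseteq U_1\supseteq\cdots$ to fix each window). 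Your direction-matching analysis (anchors on the side opposite to the propagation direction) is exactly the point that makes the pairing of $C4$ with right-finiteness and $C5$ with left-finiteness work, and coincides with the paper's reasoning.
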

\begin{proof}
$a)$ We assume $X$ right-finite, the case left-finite is treated analogously. 
Let $(h_\ell)_{\ell\geq 0}$ and $(S_\ell)_{\ell\geq 0}$ be sequences
as in Lemma \ref{aux3} such that the associated function $h_\infty\in C3$. For each 
$\ell\geq 0$ we denote by $I_\ell=[a_\ell,b_\ell]$ the 
minimal interval in $\mathbb{Z}$ such that $dom(h_\ell)\subset I_\ell$;
clearly $I_\ell\subset I_{\ell+1}$ 
for all $\ell\geq 0$ and $\bigcup_{\ell\geq 0}I_\ell=\mathbb{Z}$. Hence, there exists a 
first integer $\ell_1$ such that $h_{\ell_1}$ has gaps; that is, 
$dom(h_\ell)=I_\ell$ for all $0\leq\ell<\ell_1$ and there are integers 
$k_1\geq 1$ and 
$a_{\ell_1}\leq a_{\ell_1}^1<b_{\ell_1}^1<\cdots 
<a_{\ell_1}^{k_1}<b_{\ell_1}^{k_1}\leq b_{\ell_1}$ in such a way that 
$dom(h_{\ell_1})=I_{\ell_1}\setminus\bigcup_{i=1}^{k_1}
(a_{\ell_1}^i,b_{\ell_1}^i)$; the open intervals $(a_{\ell_1}^i,b_{\ell_1}^i)$, 
$i=1,\cdots,k_1$, are the gaps in $dom(h_{\ell_1})$. On the other hand, since 
the 
block $w_1=x^k\vert_{[a_{\ell_1},a_{\ell_1}^1]}$ is the same for all 
$k\in S_{\ell_1}$ and $X$ is right-finite, there exist an 
$(b_{\ell_1}^1-a_{\ell_1}^1-1)$-block $w$ and an infinite subset 
$S_{\ell_1}^\prime$ of $S_{\ell_1}$ such that 
$x^k\vert_{[a_{\ell_1},b_{\ell_1}^1]}=w_1w$ for all $k\in S_{\ell_1}^\prime$. 
Repeating this procedure in each gap of $dom(h_{\ell_1})$, both the 
function $h_{\ell_1}$ and the set $S_{\ell_1}$ are upgraded so that:
$dom(h_{\ell_1})=[a_{\ell_1},b_{\ell_1}]$, $S_{\ell_1}\subset S_{\ell_1-1}$ and 
$x^k\in C_X(h_{\ell_1})$ for all $k\in S_{\ell_1}$; for simplicity we have used the same notation for such update.
Thus, with recursive arguments two new sequences
$(h_\ell)_{\ell\geq 0}$ and $(S_\ell)_{\ell\geq 0}$ are constructed so that
for all $\ell\geq 0$ the following assertions hold: 
\begin{enumerate}[$\bullet$]
\item
$dom(h_\ell)=[a_\ell,b_\ell]$, $h_{\ell+1}$ extends $h_\ell$ and 
$\bigcup_{\ell\geq 0}[a_\ell,b_\ell]=\mathbb{Z}$.
\item
$S_\ell$ is an infinite subset of $\N$ with $S_{\ell+1}\subset S_\ell$ and 
$x^k\in C_X(h_\ell)$ for all $k\in S_\ell$.
\end{enumerate}
The proof of parte a) ends
by noting that the new function $h_\infty=\bigoplus_{\ell \geq 0}h_\ell$ has 
domain $\mathbb{Z}$ and therefore the sequence $(x^k)_{k\geq 1}$ is nice; see proof of Theorem \ref{thm1}.

\smallskip

The proofs for $b)$ and $c)$ essentially follow with the same 
scheme of the previous one. For instance in the case $b)$ take 
$(h_\ell)_{\ell\geq 0}$, $(S_\ell)_{\ell\geq 0}$ are as in Lemma 
\ref{aux3} and consider the corresponding funtion $h_\infty$;
if $[a_\ell,b_\ell]$ and $(-\infty,M]$ are the minimal intervals containing 
respectively $dom(h_\ell)$ and $dom(h_\infty)$, then one can assume that 
$M\in dom(h_0)$ and $h_0$ has gaps, say $(a_0^0,b_0^0),\cdots, 
(a_0^{k_0},b_0^{k_0})$. Next observe that the blocks 
$x^k\vert_{[a_{0},a_{0}^{0}]}, x^k\vert_{[b_{0}^j,a_{0}^{j+1}]}$ with 
$0\leq j< k_0$ and $x^k\vert_{[b_{0}^{k_0},M]}$ do not change when $k\in 
S_0$; so the right-finiteness property allow to select an 
$(M-a_0)$-block $w$ and an infinite set $S_0$ (same nomenclature for short) 
such that $x^k\vert_{[a_{0},M+1]}=w$ for all $k\in S_0$. Therefore, both the 
sequence of functions $(h_\ell)_{\ell\geq 0}$ and the nested sequence of 
infinite subsets $(S_\ell)_{\ell\geq 0}$ can be upgraded in such a way that for 
all $\ell\geq 0$ one has $dom(h_\ell)=[a_\ell,M+1+\ell]$ and $x^k\in 
C_X(h_\ell)$ for every $k\in S_\ell$. This leads to a new function $h_\infty$ 
whose domain is $\mathbb{Z}$ and then $(x^k)_{k\geq 1}$ is a nice sequence.
\end{proof}

\begin{rk}\label{rk2}
Observe that when $X$ is bilaterally-finite, every sequence $(x^k)_{k\geq 1}$ 
in $X(\Psi)$ is nice; in fact, if there exists a barrier attached to $\Psi$ of 
finite degree such that $h_\infty\in C1$, then the sequences 
$(h_\ell)_{\ell\geq 0}$ and $(S_\ell)_{\ell\geq 0}$ can be upgraded as above in 
such a way that the new function $h_\infty$ has $\mathbb{Z}$ as its domain.
\end{rk}

The following result complements Theorem \ref{thm1} in order to establish sufficient conditions to guarantee the closed image property for ESBC of finite degree. 

\begin{thm}\label{thm2}
Let $\Psi:X\to\mathcal{U}^\mathbb{Z}$ be an ESBC of finite degree. If one of the following 
conditions is fulfilled, then $\Psi(X)$ is shift space:
\begin{enumerate}[a)]
\item
$X$ is bilaterally-finite.
\item
$X$ is right-finite and a function $h_\infty\in C1\cup C2\cup C3\cup C4$ is associated to each sequence in $X(\Psi)$.
\item
$X$ is left-finite and a function $h_\infty\in C1\cup C2\cup C3\cup C5$ is associated to each sequence in $X(\Psi)$.
\end{enumerate}
\end{thm}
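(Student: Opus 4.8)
The plan is to assemble the conclusion from the machinery already built in Theorem \ref{thm1}, Lemma \ref{p1} and Remark \ref{rk2}, treating each distinguished sequence separately. As noted at the opening of this subsection, it is enough to verify the closed image property: for every $(x^k)_{k\geq 1}\in X(\Psi)$ with $\Psi(x^k)\to y$, the limit $y$ must lie in $\Psi(X)$; shift-commutativity then upgrades closedness to the shift-space property.

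First I would dispose of part a). Since $X$ is bilaterally-finite it is simultaneously right-finite and left-finite, so Remark \ref{rk2} applies and every sequence in $X(\Psi)$ is nice. Concretely, a nice sequence has a subsequence $x^{k_j}\to x$ with $x\in X$ (recall $X$ is closed), and the continuity of $\Psi$ gives $\Psi(x^{k_j})\to\Psi(x)$; since the full image sequence converges to $y$ we conclude $y=\Psi(x)\in\Psi(X)$. Hence $\Psi(X)$ is closed.

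For parts b) and c) I would fix a distinguished sequence $(x^k)_{k\geq 1}$ with $\Psi(x^k)\to y$, choose a barrier attached to $\Psi$ of finite degree realizing the standing hypothesis, and split on the class of the associated function $h_\infty$, which by assumption lies in $C1\cup C2\cup C3\cup C4$ (case b) or $C1\cup C2\cup C3\cup C5$ (case c). If $h_\infty\in C1\cup C2$, the argument in the proof of Theorem \ref{thm1} applies unchanged and produces $y\in\Psi(X)$ (for $C2$ through a subsequence converging to $h_\infty\in X$, for $C1$ through the stabilized cylinder $C_X(h_L)$ all of whose points map to $y$). If $h_\infty\in C3$, then Lemma \ref{p1} a) --- using right-finiteness in case b) and left-finiteness in case c) --- shows $(x^k)_{k\geq 1}$ is nice, and niceness again forces $y\in\Psi(X)$ by the continuity argument above. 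Finally, for the remaining class I would invoke Lemma \ref{p1} b) with right-finiteness (class $C4$, case b) or Lemma \ref{p1} c) with left-finiteness (class $C5$, case c), both of which yield niceness and hence $y\in\Psi(X)$. Since the prescribed unions exhaust the possible classes under each hypothesis, every distinguished sequence has its image limit in $\Psi(X)$, so $\Psi(X)$ is closed and therefore a shift space.

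I do not expect any genuine analytic obstacle here: all the delicate constructions --- the nested cylinders of Lemma \ref{aux3} and the gap-filling of Lemma \ref{p1} --- are already available. The one point that will require care is the bookkeeping, namely that the association of $h_\infty$ to a sequence is multivalued and depends on the chosen barrier (Remark \ref{r22}); the argument must therefore stay strictly per-sequence, exploiting that the hypothesis guarantees, for each fixed distinguished sequence, a barrier for which the resulting $h_\infty$ falls inside the allowed union of classes. Once that is respected, the three parts reduce to a finite and exhaustive case analysis.
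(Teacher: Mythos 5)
Your proposal is correct and follows essentially the same route as the paper, whose proof is precisely the "straightforward combination" of the argument of Theorem \ref{thm1} (via Remark \ref{rk3}) with Lemma \ref{p1} (and Remark \ref{rk2} for the bilaterally-finite case) that you spell out case by case. Your per-sequence bookkeeping regarding the multivalued association of $h_\infty$ is exactly the care the paper implicitly assumes.
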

\begin{proof}
It is a straightforward combination of the arguments developed in Theorem 
\ref{thm1} (see Remark \ref{rk3}) and Lemma \ref{p1}.
\end{proof}

\subsection{On the reversibility property of ESBC}
We begin this last part of the article by recalling that an ESBC $\Psi:X\to Y$ is said to be {\em reversible} if it is bijective and its inverse is also an ESBC.

We have already mentioned that in the case of finite alphabets, every bijective shift morphism is reversible (\cite[Theorem 1.5.14]{lind}), and this is not true when that symbol sets are not finite
(see \cite[Example 1.10.3]{cecche} and \cite[Lemma 5.1]{cecche2}). It is also well 
known that in the classical cellular automata context, reversibility is equivalent to 
injectivity, this was proved independently by Hedlund \cite{hedlund} and 
Richardson \cite{richarson}.
We consider relevant to highlight that the notion of 
reversibility has captured the interest of researchers 
in several scientific and technological scenarios, see for example 
\cite{kari}, \cite{morita1} and \cite{morita}. 

\smallskip

Our third and last main contribution is the following theorem.

\begin{thm}\label{thm3}
A bijective ESBC of finite degree is reversible if one of the 
conditions established in theorems \ref{thm1} or \ref{thm2} is satisfied.
\end{thm}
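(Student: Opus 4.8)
The plan is to prove reversibility by first leveraging Theorems \ref{thm1} and \ref{thm2} to secure the closed image property, which guarantees that $Y=\Psi(X)$ is a shift space; and then showing that the set-theoretic inverse $\Psi^{-1}:Y\to X$ is itself an ESBC by invoking the extended Curtis--Hedlund--Lyndon Theorem (Theorem \ref{chl4}). Since $\Psi$ is assumed bijective, $\Psi^{-1}$ exists as a map, and it is automatically shift-commuting because $\Psi$ is: from $\Psi\circ\sigma=\sigma\circ\Psi$ one derives $\Psi^{-1}\circ\sigma=\sigma\circ\Psi^{-1}$ on $Y$. By Theorem \ref{chl4}, it therefore suffices to establish that $\Psi^{-1}$ is continuous; continuity plus the shift-commuting property is exactly what makes it a shift morphism, hence an ESBC.

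Continuity of $\Psi^{-1}$ is the crux, and here I would argue sequentially using the metrizability of the shift spaces. Let $(y^k)_{k\geq 1}$ be a sequence in $Y$ converging to some $y\in Y$, and write $x^k=\Psi^{-1}(y^k)$ and $x=\Psi^{-1}(y)$. I must show $x^k\to x$. The sequence $(x^k)_{k\geq 1}$ is by construction distinguished for $\Psi$, since $\Psi(x^k)=y^k$ converges. Under the hypotheses of Theorem \ref{thm1} or \ref{thm2}, the distinguished sequences of $\Psi$ are controlled: either they are nice (possess convergent subsequences), as happens in the classes $C2$ and, under the finiteness hypotheses, $C3,C4,C5$; or, when $h_\infty\in C1$, the decisive observation recorded in Remark \ref{rk3} applies—namely, that for an \emph{injective} $\Psi$ the sequence $(x^k)_{k\geq 1}$ is nice, indeed it admits a constant subsequence. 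So in every case covered by the hypotheses the distinguished sequence $(x^k)_{k\geq 1}$ is nice.

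Once niceness is in hand, the standard compactness-flavored argument closes the gap. Given any subsequence of $(x^k)_{k\geq 1}$, niceness provides a further subsequence $(x^{k_j})_j$ converging to some $\bar{x}\in X$ (here $X$ is closed, so the limit lies in $X$). By continuity of $\Psi$ we get $\Psi(x^{k_j})\to\Psi(\bar{x})$; but $\Psi(x^{k_j})=y^{k_j}\to y$, so $\Psi(\bar{x})=y$, and injectivity of $\Psi$ forces $\bar{x}=\Psi^{-1}(y)=x$. Thus every subsequence of $(x^k)_{k\geq 1}$ has a further subsequence converging to the single point $x$, which in a metric space implies $x^k\to x$. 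This proves $\Psi^{-1}$ is continuous.

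The main obstacle I anticipate is handling the class $C1$ uniformly with the other classes, since $C1$ need not yield nice distinguished sequences in general (Example \ref{exam2} exhibits non-nice distinguished sequences precisely when $h_\infty\in C1$). The resolution is exactly the injectivity hypothesis: reversibility presupposes bijectivity, and Remark \ref{rk3} guarantees that injectivity rescues the $C1$ case by forcing a constant—hence convergent—subsequence. With that single observation, every class permitted by Theorems \ref{thm1} and \ref{thm2} produces nice distinguished sequences, and the subsequence argument above runs identically in all cases, completing the proof.
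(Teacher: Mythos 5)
Your proof is correct and follows essentially the same route as the paper: reduce reversibility to continuity of the inverse via Theorem \ref{chl4}, observe that under the hypotheses of Theorems \ref{thm1} and \ref{thm2} every distinguished sequence is nice (using Remark \ref{rk3} to rescue the class $C1$ via injectivity, and Lemma \ref{p1} for $C3$, $C4$, $C5$), and then pin down limits with continuity and injectivity of $\Psi$. The only difference is cosmetic: you argue continuity directly through the subsequence principle in metric spaces, whereas the paper argues by contradiction assuming $\Phi_0$ is not locally constant; the ingredients and their roles are identical.
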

\begin{proof}
Let $X\subset \mathcal{A}^\mathbb{Z}$ and $Y\subset\mathcal{U}^\mathbb{Z}$ be shift spaces. Take a
bijective ESBC $\Psi:X\to Y$ of finite degree, we denote by $\Phi:Y\to X$ 
its inverse; clearly it is shift-commuting. So, from 
Theorem \ref{chl4} it follows that $\Phi$ is an ESBC whenever
it is continuous; in other words, $\Psi$ is reversible if the $0$-coordinate 
function $\Phi_0:Y\to \mathcal{A}$ of $\Phi$ is locally constant; recall that the 
alphabet $\mathcal{U}$ is endowed with the discrete 
topology and $\Phi_n=\Phi_0\circ \sigma^n$ for every $n\in\mathbb{Z}$, here $\sigma$ 
denotes the shift map on $\mathcal{U}^\mathbb{Z}$. 

\smallskip

Suppose 
that $\Phi_0$ is not locally constant, so for some $y\in Y$ there exists a 
sequence $(y^k)_{k\geq 1}\subset Y$ such that $y^k\to y$ when $k\to +\infty$ 
and $\Phi_0(y^k)\neq \Phi_0(y)$. Now, let $x=(x_n)_{n\in\mathbb{Z}}$ and 
$x^k=(x^k_n)_{n\in\mathbb{Z}}$ be the unique elements in $X$ such that $\Psi(x)=y$ and 
$\Psi(x^k)=y^k$ for all $k\geq 1$; obviously $(x^k)_{k\geq 1}$ is a distinguished sequence for $\Psi$ 
and $x^k_0\neq x_0$ for all $k\geq 1$. Take a barrier $\Bi$ attached 
to $\Psi$ of finite degree, let $(h_\ell)_{\ell\geq 0}$ and 
$(S_\ell)_{\ell\geq 0}$ be sequences related to $(x^k)_{k\geq 1}$ and $\Bi$ 
as in Lemma \ref{aux3} and let $h_\infty$ be an associated function to $(x^k)_{k\geq 1}$ and $\mathcal{B}$. 
Notice that if $(x^k)_{k\geq 1}$ is nice, then the injectivity of $\Psi$ leads 
to a contradiction; in fact, if $z\in X$ is a limit point of $(x^k)_{k\geq 1}$, then 
$\Psi(z)=\Psi(x)$; however, $z\neq x$ because $x^k_0\neq x_0$ for all $k$. 
The proof ends by noting that if either $h_\infty\in C1\cup C2$, or 
$h_\infty\notin C1\cup C2$ and $X$ has the appropriated finiteness property (right-finite or left-finite), then the sequence $(x^k)_{k\geq 1}$ is always nice, see Remark \ref{rk3} and proof of
Lemma \ref{p1}. 
\end{proof}

The following corollary is obvious.
\begin{cor}
If $\Psi:X\to\mathcal{U}^\mathbb{Z}$ is an injective ESBC of finite degree and one of the 
conditions established in theorems \ref{thm1} and \ref{thm2} is satisfied, then $\Psi(X)$ is a shift space and 
$\Psi:X\to\Psi(X)$ is reversible.
\end{cor}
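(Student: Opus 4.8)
The plan is to obtain both conclusions by a direct appeal to the results already established, observing that the injectivity hypothesis, combined with whichever condition is assumed, places us exactly in the situation governed by Theorem \ref{thm3}.

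First I would settle the closed image property. By assumption one of the conditions listed in Theorem \ref{thm1} or Theorem \ref{thm2} holds, and that condition is precisely a hypothesis under which the corresponding theorem guarantees that $\Psi(X)$ is a shift space. Writing $Y:=\Psi(X)$, we therefore have that $Y\subseteq\mathcal{U}^\mathbb{Z}$ is a shift space, which is the first assertion of the corollary.

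Next I would recast $\Psi$ as a morphism between shift spaces. Viewed as a map $\Psi:X\to Y$, it is surjective by the very definition of $Y$ and injective by hypothesis, hence bijective. It also remains an ESBC of finite degree, since restricting the codomain from $\mathcal{U}^\mathbb{Z}$ to $Y$ alters neither the $0$-coordinate function $\Psi_0$ nor the collection of barriers attached to $\Psi$; in particular every finite-degree barrier attached to $\Psi:X\to\mathcal{U}^\mathbb{Z}$ is still a finite-degree barrier attached to $\Psi:X\to Y$. Because the condition borrowed from Theorem \ref{thm1} or Theorem \ref{thm2} is verbatim the hypothesis required by Theorem \ref{thm3}, that theorem applies to the bijective finite-degree ESBC $\Psi:X\to Y$ and yields its reversibility; that is, $\Psi^{-1}:Y\to X$ is again an ESBC. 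This gives the second assertion.

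The only point needing care, and the reason the two steps must be taken in this order, is that Theorem \ref{thm3} presupposes a shift space as codomain; thus the closed image property proved in the first step is not an incidental extra but a logical prerequisite for invoking reversibility. Beyond this, no genuine obstacle arises: one only checks that the finite-degree property and the distinguished-sequence hypotheses transfer unchanged to the map with restricted codomain, which they do because both notions are phrased entirely in terms of $\Psi_0$ and of sequences lying in $X$.
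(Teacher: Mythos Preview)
Your proposal is correct and is precisely the obvious deduction the paper has in mind: the paper offers no argument beyond the line ``The following corollary is obvious,'' and your two steps—first invoking Theorem \ref{thm1} or \ref{thm2} for the closed image, then applying Theorem \ref{thm3} to the resulting bijection $\Psi:X\to\Psi(X)$—are exactly how that obviousness unpacks. Your remark that the finite-degree and distinguished-sequence hypotheses transfer unchanged under restriction of the codomain is the only point worth making explicit, and you handle it correctly.
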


The next and last example shows a reversible finite degree sliding block code whose inverse is neither of finite degree nor a sliding block code. 

\begin{exa}\label{arre}
Let $\Psi:\N^\mathbb{Z}\to\N^\mathbb{Z}$ the sliding block code  of memory 0 and anticipation 1 
defined for each $x=(x_n)_{n\in\mathbb{Z}}\in\N^\mathbb{Z}$ by  
$$
\Psi(x)_n=x_n+2x_{n+1}, \,\text{for all $n\in\mathbb{Z}$}.
$$
Clearly $\Psi$ is induced by the local rule $\psi:\N^2\to\N$ given 
by $\psi(uv)=u+2v$. It is not difficult to verify that $\Psi$ is injective but not onto; on the other hand, 
since for each natural number $y$ there are only finite $2$-blocks $uv$ in $\mathbb{N}^2$ solving $u+2v=y$, the sliding block code $\Psi$ is of finite degree. 
From the preceding discussions it 
follows that the image set $Y=\Psi(\N^\mathbb{Z})$ is a shift space and its 
inverse $\Phi:Y\to\N^\mathbb{Z}$ is an ESBC. Let us see that $\Phi$ is not of finite degree; suppose it is not. So there exists a barrier $\mathcal{B}$ on $Y$ attached to $\Phi$ such that for each $a\in\mathbb{N}$, the number of cylinders in $\mathcal{B}$ with $\Phi_0$-image equal to $a$ is finite. Let $C_Y(h_1),\cdots,C_Y(h_m)$ be the cylinders in $\mathcal{B}$ satisfying $\Phi_0(C_Y(h_i))=0$, $i=1,\cdots,m$. Consider, for each $j\in\mathbb{N}$, $x^j=(x^j_k)_{k\in\mathbb{Z}}$ with $x^j_k=0$ for $k\neq 1$ and $x^j_1=j$; clearly the $\Psi$-image $y^j=(y^j_k)_{k\in\mathbb{Z}}$ of $x^j$ is given by $y^j_0=2j$, $y^j_1=j$ and $y^j_k=0$ if $k\notin\{0,1\}$. Since $j$ is arbitrary, one has that $\{0,1\}\cap dom(h_i)=\emptyset$ and $h_i(\ell)=0$ for all $\ell\in dom(h_i)$ and every $i=1,\cdots,m$. On the other hand, if one takes any index $1\leq i\leq m$ and any $\ell\in dom(h_i)$, then $y=(y_k)_{k\in\mathbb{Z}}$, with $y_{\ell-1}=2$, $y_\ell=1$ and $y_k=0$ otherwise, belongs to $Y$ and $\Phi_0(y)=0$; however, $y\notin C_Y(h_i)$ whatever the index $i=1,\cdots,m$, which is a contradiction. 

\smallskip

Now we will show that $\Phi$ is not a sliding block code. We proceed on the contrary and assume without generality loss that $\Phi$ is induced by a local rule with same memory and anticipation, say $L\geq 1$; that is, there exists $\varphi:\mathcal{L}_{2L+1}(Y)\to\mathbb{N}$ such that 
$\Phi(y)_k=\varphi\left(y|_{[-L+k,k+L]}\right)$ for all $y\in Y$ and every $k\in\mathbb{Z}$; we note 
in particular that $\varphi\left(y|_{[-L,L]}\right)=x^y_0$ where $x^y$ is the unique element of $\mathbb{N}^\mathbb{Z}$ such that $\Phi(y)=x^y$. However, this assumption is negated by the following fact. For the elements $x=(x_k)_{k\in\mathbb{Z}}$ and $x'=(x_k')_{k\in\mathbb{Z}}$ in $\mathbb{N}^\mathbb{Z}$ given by
\begin{multline*}
x_k=\begin{cases}
2^{L+1-k},\text{ if $-L\leq k\leq L+1$}\\
0,\text{ otherwise}
\end{cases}
\,\text{ and }\, \\
x_k'=\begin{cases}
2^{2\ell+1},\text{ if $k=L-2\ell+1$ and $0\leq \ell \leq L$}\\
0,\text{ otherwise}
\end{cases}
\end{multline*}
one obtains, after a direct calculation, that $y=\Psi(x)$ and $z=\Psi(x')$ have the same central block
$y|_{[-L,L]}=z|_{[-L,L]}=2^{2L+2}2^{2L+1}\cdots 2^32^2$, but $x_0\neq x_0'$.

\smallskip

To finish, we construct a barrier on $Y$ attached to $\Phi$, on it we define an extended local rule inducing $\Phi$. For each $N\geq 1$ 
and $y=(y_n)_{n\in\mathbb{Z}}\in Y$ consider the set 
$$
S_N(y)=\{w_{-N}\cdots w_{N+1}\in\N^{2N+2}: w_i+2w_{i+1}=y_i,\,i=-N,\cdots, N\}.
$$
The next properties are immediate:

\smallskip
\noindent
(i) The block $x^y\vert_{[-N,N+1]}$ is always in $S_N(y)$.

\smallskip
\noindent
(ii) If 
$N\geq 2$, a block $w_{-N}\cdots w_{N+1}\in S_N(y)$ iff
$w_{-N+1}\cdots w_{N}$ belongs to $S_{N-1}(y)$, $w_{-N}+2w_{-N+1}=y_{-N}$ and $w_{N}+2w_{N+1}=y_{N}$.

\smallskip
\noindent
(iii) If  $w_{-N}\cdots w_{N+1}$ and $w_{-N}'\cdots w_{N+1}'$ are blocks in $S_N(y)$ with $w_i=w_i'$ for some $i$, then those blocks match.

\smallskip

Additionally, since for each $w_{-1}w_0w_1w_2\in S_1(y)$ one has $w_1=\frac{1}{2}(y_0-w_0)$, the cardinal $\# S_1(y)$ of $S_1(y)$ is bounded above by  
$\lfloor \frac{y_0}{2}\rfloor +1$, where $\lfloor x\rfloor$ means the integer 
part of $x$. From property (ii) above 
$\# S_2(y)\leq \lfloor \lfloor \frac{y_0}{2} \rfloor/2 \rfloor +1=\lfloor 
\frac{y_0}{4} \rfloor +1$. Thus, by a recursive argument one concludes that
$1\leq \# S_N(y)\leq \lfloor \frac{y_0}{2^N} \rfloor +1$, therefore there exists a 
first integer $r(y)\geq 1$ such that $\# S_N(y)=1$ for all $N\geq r(y)$. Obviously 
if $1\leq N<r(y)$, the system of diophantine equations 
$w_i+2w_{i+1}=y_i$, with $|i|\leq N$, has at least two solutions in 
$\N^{2N+2}$ and  $x_{-N}^y\cdots x_{N+1}^y$ is the unique solution of such system 
when $N\geq r(y)$. 

\smallskip

For each $y\in Y$ we consider the function 
$h^y:\{-r(y),\cdots,r(y)\}\to\N$ defined by $h^y(i)=y_i$ for all $0\leq 
i\leq r(y)$. It is straightforward to check that the collection $\mathcal{B}$ of the cylinders 
$C_Y(h^y)$ is a barrier attached on $Y$ to $\Phi$ and $\varphi:\mathcal{B}\to\mathbb{N}$ with $\varphi(C_Y(h^y))=x^y_0$ induces $\Phi$. Note that $r(z)=r(y)$ for all $z\in C_Y(h^y)$; indeed, 
either $C_Y(h^y)\cap C_Y(h^z)=\emptyset$ if $z\notin C_Y(h^y)$ or $C_Y(h^y)=C_Y(h^z)$ for all 
$z\in C_Y(h^y)$. 
\end{exa}
%
%

\end{document}